\mathchardef\mhyphen="2D
\newcommand{\id}{\mathrm{id}}
\newcommand{\dist}{\mathrm{d}}
\newcommand{\R}{\mathbb{R}}
\newcommand{\e}{\varepsilon}
\newcommand{\nerve}{\mathrm{Nrv\;}}
\newcommand{\im}{\mathrm{im\xspace}}
\newcommand{\dgm}{\mathrm{D}}
\newcommand{\CW}{\mathrm{CW}}
\newcommand{\hocolim}{\mathrm{hocolim}\;}
\newcommand{\B}{{\mathcal{B}}}
\newcommand{\U}{{\mathcal{U}}}
\newcommand{\W}{\mathcal{W}}
\newcommand{\N}{\mathcal{N}}
\newcommand{\rank}{\mathrm{rk\xspace}}
\newtheorem{theorem}{Theorem}
\newtheorem{lemma}[theorem]{Lemma}
\newtheorem{definition}[theorem]{Definition}
\newtheorem{corollary}{Corollary}[theorem]
\newtheorem{proposition}[theorem]{Proposition}
\title{The Generalized Persistent Nerve Theorem}
  \author{Nicholas J. Cavanna\\
University of Connecticut\\
  \texttt{nicholas.j.cavanna@gmail.com}
 \and 
 Donald R.~Sheehy\\
 University of Connecticut\\
 \texttt{don.r.sheehy@gmail.com}
 }
\date{}
\begin{document}
\maketitle

\begin{abstract}
 The Nerve Theorem equates the homotopy type of a suitably covered topological space with that of a combinatorial simplicial complex called a nerve.
After filtering a space one can compute the filtration's persistent homology.
In persistence theory  the Nerve Theorem has the Persistent Nerve Lemma as an analogue which equates the persistent homology of a filtration of spaces and that of the filtration of nerves corresponding to a filtration of covers assuming each cover is good.
As nerves are discrete, their geometric realizations can serve as proxies for topological spaces and filtrations in algorithmic settings, e.g.~the \v Cech and Rips complexes are nerves so one can compute their homology over various scales rather than that of a well-sampled space.

In this paper we introduce a parameterized generalization of a good cover filtration called an $\e$-good cover.
It is defined as a cover filtration in which the reduced homology groups of the image of the inclusions between the intersections of the cover filtration at two scales $\e$ apart are trivial.
Assuming that one has an $\e$-good cover filtration of a finite simplicial filtration, we prove a tight bound on the bottleneck distance between the persistence diagrams of the nerve filtration and the simplicial filtration that is linear with respect to $\e$  and the homology dimension in question.
Quantitative guarantees for covers that are not good are useful for when one has a cover of a non-convex metric space, or one has more generally constructed simplicial covers that are not the result of triangulations of metric balls.
These guarantees also aid in situations when constructing a good cover filtration is computationally intensive, but a smaller $\e$-good cover's construction is feasible.

Other notable contributions  are the introduction of an interleaving between the nerve and covered space's chain complexes up to chain homotopy and the constructive nature of the interleaving that ultimately provides the bound on the bottleneck distance,
The Persistent Nerve Lemma is a direct corollary of our main theorem as good covers are $0$-good covers.
 Furthermore, we  symmetrize the asymmetric interleaving used to prove the bound by shifting the nerve filtration, improving the interleaving distance by a factor of $2$.
\end{abstract}
\pagebreak

\section{Introduction}
\label{sec:introduction}

  The Nerve Theorem~\cite{borsuk48imbedding} is an important link between topological spaces and discrete geometric and topological algorithms.
  It is at the heart, either implicitly or explicitly, of many foundational algorithms in the rapidly growing field of topological data analysis.
  Example problems include surface reconstruction~\cite{chazal08towards,edelsbrunner94triangulating,edelsbrunner95union},
  function reconstruction~\cite{chazal11scalar},
  homology inference~\cite{chazal06topology,niyogi08finding,niyogi11topological}
  , coordinate-free sensor network coverage~\cite{cavanna17when,desilva07coverage}, shape analysis~\cite{chazal09gromov-hausdorff},
  data modeling~\cite{dey16multiscale,singh07topological,stovner12mapper}, and clustering~\cite{chazal13persistence,edelsbrunner17topological}.

  A \emph{cover} of a simplicial complex $W$ is a collection of subcomplexes $\U = \{U_0,\ldots, U_n\}$  such that $W\subseteq \bigcup_{i=0}^n U_i$. \footnote{The notation $W$ is intended to help the reader remember that $W$, (\emph{``double U''}) is the union of the $U$'s.}
  The \emph{nerve} of $\U$ is the abstract simplicial complex defined as follows
  \[
    \nerve\U := \{\textrm{non-empty }\sigma \subseteq [n] \mid \bigcap_{i\in \sigma} U_i\neq \emptyset \}.
  \]
  A cover $\U$ is \emph{good} if for every $\sigma\subseteq [n]$ the simplicial complex $\bigcap_{i\in \sigma}U_i$ is empty or contractible.
  The Nerve Theorem equates the homotopy type and thus homology of the covered simplicial complex $W$ to that of the nerve of a good cover $\U$.
  This theorem allows one to construct algorithms that compute topological properties of the nerve of a particular good cover and relate the output back to infer properties of the covered space.

  In persistence theory, one often works with filtered topological spaces: $\W = (W^\alpha)_{\alpha\ge 0}$, where $W^\alpha\subseteq W^\beta$ for all $\alpha \le \beta$.
  The Persistent Nerve Lemma of Chazal and Oudot~\cite{chazal08towards} proves that the Nerve Theorem extends in the most natural way to the persistence setting.
  In particular, it implies that the \emph{union filtration} $\W = (\bigcup_{i=0}^n U_i^\alpha)_{\alpha\ge 0}$ and the \emph{nerve filtration} $\nerve\U = (\nerve \U^\alpha)_{\alpha\ge 0}$ have the same persistent homology assuming $\U^\alpha := \{U_0^\alpha,\ldots, U_n^\alpha\}$ is a good cover of $W^\alpha$ for all $\alpha$.

The requirement of good covers to ensure topological theoretical guarantees has  significant algorithmic implications as it significantly reduces the spaces one can work in.
Many algorithms in topological data analysis utilize a standard pipeline where one considers a nice sample of some space, builds a simplicial complex from this sample called the \v Cech complex, which is the nerve of metric balls, and uses the fact that this has the same topology of the metric balls.
  This means that algorithms depending on the Nerve Theorem can only be applied to
spaces that admit covers by convex sets in Euclidean spaces or smooth manifolds of sufficiently large convexity radius~\cite{chazal11scalar} can be considered.
  Even one small hole in an intersection of cover elements can render nerve-based computational algorithms invalid as the theory rests upon some interpretation of this theorem.
  This also has implications for triangulations of covers of surfaces that have marginal measurement errors, because the errors can cause the cover elements to no longer be convex for example.
  Nerves are also in coverage testing for homological sensor networks~\cite{desilva07coverage,cavanna17when}, however the idealized model of Euclidean balls as coverage regions differs significantly from the very jagged coverage regions measured in practice, particularly when taking into account the affect of physical obstacles on real-life sensors' detection ranges.
  
 In this paper we introduce a parametrization of the good cover condition  for simplicial cover filtrations called an $\e$-good cover, which roughly says that the homology of the cover elements' intersections only persists for some $\e>0$ amount of time.
 Our main result is as follows.
 Given a simplicial cover filtration that is an $\e$-good cover filtration of the corresponding covered simplicial filtration, there exists a constructive $(K+1)\e$-interleaving between the $K$-dimensional persistence modules of the a finite simplicial filtration and its covers' nerve filtration, which implies a tight bound on the bottleneck distance between each modules persistence diagram.
 We assume no structure on the simplicial cover filtration other than that the simplicial complexes are finite.
 This persistence module interleaving notably results from a ``pseudo" interleaving between chain complexes where we only require the maps compositions are chain homotopic to the identity chain maps between scales.
 Due to the spaces in question being finite simplicial complexes, the chain maps used to construct the homological interleaving are computable.
 
 A corollary of the Generalized Persistent Nerve Theorem is the algebraic Persistent Nerve Lemma for simplicial filtrations and covers.
We also reduce the interleaving distance and thus the bottleneck distance bound by a factor of $2$ in Section~\ref{sec:biased} by considering a time-scale shifted nerve filtration assuming one knows an upper bound on the $\e$-goodness of the cover.
  
\paragraph*{History of the Problem}
In August $2016$, Govc and Skraba posted the solution to a very similar problem, among other results in their paper \emph{An Approximate Nerve Theorem} to the arXiv.
They originally assumed that the reduced homology of the $k$-wise intersections of a simplicial cover filtrations' cover elements are $\e$-interleaved with the $0$ module, a condition they call an $\e$-acyclic cover.
Their paper arrived at an identical bound, up to differences in definitions.
They restricted their hypotheses to a filtered simplicial complex that is induced by a cover on the complete complex.
The cover filtration at each scale was thus defined by the simplicial filtration at that scale's intersection with the cover.
Unfortunately, this assumption is too restrictive to imply a simplicial version of the Persistent Nerve Lemma which was a primary goal of this research for us, as it does not account for covers which have no inherent relation to each other outside of inclusions.

In September $2016$, we submitted our results for presentation at the 26th Fall Workshop on Computational Geometry, which notably implied the Persistent Nerve Lemma as a corollary, for our space assumptions, as originally desired.
Soon afterwards, Govc and Skraba updated their arXiv submission relaxing their cover filtration assumption, and their paper has recently been accepted to a journal~\cite{govc17approximate}.

Though our papers both prove a similar result, the approaches are very different  and utilize and develop different tools.
Govc and Skraba utilized a construction from homological algebra called a spectral sequence, as well as their novel right and left persistence module interleavings to prove their theorem, which is actually a direct result of the the module interleavings they compute between the pages of the spectral sequences and the persistence modules of the nerve and space filtrations.

In contrast, our proof technique focuses on constructing maps between the chain complexes of the nerve filtration, space filtration, and the barycentric decomposition of the so-called blow-up complex.
We go from the homological $\e$-goodness condition to corresponding chain maps and chain homotopies between them among the chain complexes of the spaces of interest.
This is more inline with the approach traditionally used to proof the Nerve Theorem and the homotopy version of the Persistent Nerve Lemma.
Due to the fact that these chain maps are defined on chain complexes of simplicial complexes and regular CW-complexes, they are computable in practice.
Our module interleaving and bottleneck distance bound results are a consequence of a chain-theoretic generalization of an interleaving only up to chain homotopy, rather than being purely one concerning persistent homology.
There are also the novel contributions of the creation of chain map between the barycentric decomposition of the nerve of a cover and the space filtration at a further time scale, and the use a technique we call \emph{lifting} to form a chain map into the barycentric decomposition of the blowup complex.
The lack of existence of such a map is the reason the Nerve Theorem fails when the cover is not good, which gives credence to the notion that the maps we construct are ``natural" choices.

\paragraph*{Related Work}

Apart from the Persistent Nerve Lemma and it's original homotopy version, researchers have examined related problems concerning covers and their associated nerves with respect to persistence theory.

Botnan and Spreemann~\cite{botnan15approximating} proved that if three cover filtrations are $\delta$-interleaved, where two of the filtrations are good and sandwich the third in the interleaving, then the bottleneck distance between the persistence module of the nerve of one of the good cover filtrations and the arbitrary one is upper bounded by $\delta$.

Dey et al.~\cite{dey17topological} prove that for a cover whose elements are path-connected, the $1$-dimensional homology of the map from the covered space to the nerve of the cover is surjective.
Using this result, they also prove that if there exists a so-called cover map between two covers, then the $1$-dimensional homology of the simplicial map resulting from the cover map between the covers' nerves is surjective.

\section{Background}\label{sec:background}

This is an overview of the combinatorial, topological and algebraic structures used in the paper to prove the Generalized Persistent Nerve Theorem.
See Hatcher's Algebraic Topology~\cite{Hatcher01} for further reference on chain complexes and homology, and Chazal et al.'s The Structure and Stability of Persistence Modules~\cite{chazal16structure} for more on persistence theory.

\subsection{Simplicial Complexes}
A \textbf{geometric simplex} is the convex closure of a set of affinely independent points.
A \textbf{(geometric) simplicial complex} $X$ is a collection of simplices such that for each simplex in $X$, each of its subsimplices are in $X$, and the intersection of two simplices is in $X$ or is empty.
An \textbf{abstract simplicial complex} $X$  over a finite vertex set $[n]:=\{0,\ldots, n\}$ is a subset of the powerset $2^{[n]}$ closed under taking subsets.
For a simplex $\sigma\subseteq [n]$, its dimension is $\#\{\sigma\}-1$.
A simplex of dimension $k$ is called a $k$-simplex of $X$.
The dimension of $X$ is the dimension of its largest simplex.

Each finite abstract simplicial complex has a corresponding geometric simplicial complex.
Consider the function $f: [n]\rightarrow \R^{n+1}$, where  $[n] \ni i\mapsto f(i)= e_i=(0,\ldots, 1,\ldots, 0)\in \R^{n+1}$.
For each maximal $\sigma$ of an abstract simplicial complex $X$, its geometric realization is $
|\sigma|:= \textrm{conv}\{f(i)\mid i\in\sigma\}$, i.e. the convex closure of the image of its vertices under $f$.
The \textbf{geometric realization} of $X$ is $|X|:= \bigcup_{\sigma\in X} |\sigma|$ which is equipped with the subspace topology inherited from the Euclidean topology.
This construction allows for discussion of topological properties of abstract simplicial complexes and is functorial in the sense that given two abstract simplicial complexes with a simplicial map between them, the geometric realization carries the simplicial map to a continuous structure-preserving map between the complexes' realizations.

CW-complexes are topogical structures that generalize the gluing procedures used to construct simplicial complexes.
A \textbf{CW-complex} $X$ is defined inductively, starting with a collection $X_0$ of $0$-cells, vertices, and then for natural $k$, $X_k$ is the union of $X_{k-1}$ and some $k$-cells whose boundaries are glued via continuous maps called attaching maps to the $(k-1)$-cells of $X_{k-1}$.
A CW-complex $X$ is called finite if $X=X_n$ for some $n$.

As an example, the $n$-sphere can be viewed as a CW-complex with one $0$-cell and one $n$-cell where the $n$-cell's boundary is attached to the $0$-cell via the constant map.
A simplicial complex is  CW-complex under the obvious gluing procedure.
Given two finite CW-complexes $X$ and $Y$, $X\times Y$ is a finite CW-complex with cells of the form $e_X\times e_Y$, where $e_X\in X$ and $e_Y\in Y$, and $\dim(e_X\times e_Y)=\dim(e_X)+\dim(e_Y)$.

  \begin{figure}[h]
    \centering
    \includegraphics[width=0.3\textwidth]{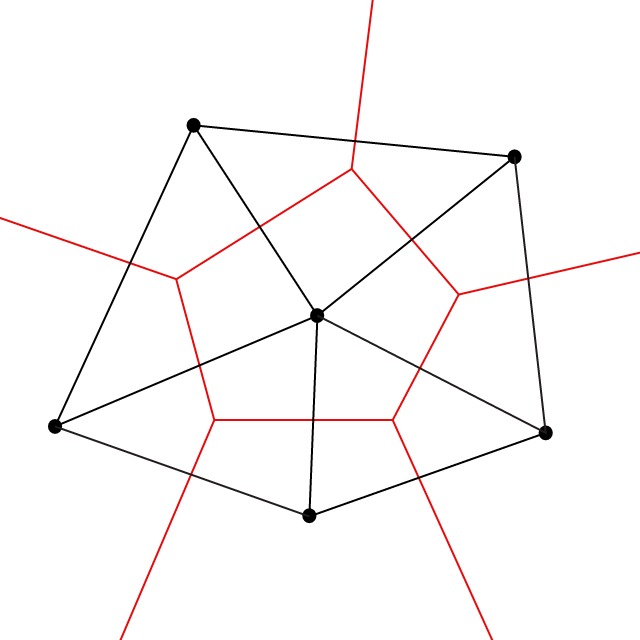}
    \caption{The Delaunay triangulation is a realization of the nerve of the Voronoi cells.}
    \label{fig:eps_good_cover}
  \end{figure}

 A \textbf{closed cover} of a simplicial complex $X$ is a collection of simplicial complexes $\mathcal{U} = \{U_a \}_{a\in A}$ all defined over the same vertex set  such that $X \subseteq \bigcup_{a\in A} U_a$ and $X$ is a subcomplex of of $\bigcup_{a\in A} U_a$.
 A space is \textbf{contractible} if it has the homotopy type of a point.
 For example, all $n$-simplices are contractible.
A cover where every nonempty intersections of finitely many elements of $\mathcal{U}$ is is called a \textbf{good cover}.
Given a closed cover $\mathcal{U}=\{U_a\}_{a\in A}$ indexed over indexign set $A$, the \textbf{nerve} of $\mathcal{U}$ is the abstract simplicial complex 
\[
\nerve\mathcal{U}=\{\sigma\subseteq A \mid \bigcap_{i\in\sigma} U_i\neq\emptyset \textrm{ and }\#\{\sigma\}<\infty\}. 
\]
The following is a version of the Nerve Theorem which relates the homotopy type of the nerve of a  cover to that of the covered space.
See Corollary 4G.3 in \cite{Hatcher01} for the more general topological formulation.

\begin{theorem}[Nerve Theorem]\label{thm:nerve_thm}
If $\mathcal{U}$ is a closed cover of a finite simplicial complex $X$ such that every non-empty intersection of finitely many of the elements in $\mathcal{U}$ is contractible, then $X$ is homotopy equivalent to the geometric realization of the nerve, $|\nerve\mathcal{U}|$.
\end{theorem}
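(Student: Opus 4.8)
The plan is to go through the standard Mayer--Vietoris blowup complex (equivalently, the homotopy colimit of the diagram of intersections of $\U$), in the spirit of the proof of Corollary 4G.3 in~\cite{Hatcher01}. Concretely, I would form the blowup complex
\[
\B := \bigcup_{\sigma \in \nerve\U} \Bigl( \bigcap_{a \in \sigma} U_a \Bigr) \times |\sigma| \ \subseteq\ |X| \times |\nerve\U|,
\]
where the union is taken inside the product, so that the piece over a simplex and those over its faces are automatically glued along the obvious inclusions. The two coordinate projections restrict to maps $\pi_X : \B \to |X|$ and $\pi_N : \B \to |\nerve\U|$, and the whole theorem reduces to showing that both $\pi_X$ and $\pi_N$ are homotopy equivalences; composing one with a homotopy inverse of the other then yields $|X| \simeq |\nerve\U|$, which is what the statement asserts since for a finite simplicial complex $X$ is identified with its geometric realization.

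For $\pi_N$: over the relative interior of a $k$-simplex $\sigma = \{a_0,\dots,a_k\}$ the map $\pi_N$ is the projection $\bigl(\bigcap_{a\in\sigma}U_a\bigr)\times\mathrm{int}|\sigma| \to \mathrm{int}|\sigma|$, whose fiber $\bigcap_{a\in\sigma}U_a$ is contractible precisely by the good-cover hypothesis. I would upgrade this fiberwise contractibility to a genuine homotopy equivalence by inducting over the skeleta of $\nerve\U$: passing from the $(k-1)$-skeleton to the $k$-skeleton glues on, for each new $k$-simplex $\sigma$, the cell $\bigl(\bigcap_{a\in\sigma}U_a\bigr)\times|\sigma|$ along $\bigl(\bigcap_{a\in\sigma}U_a\bigr)\times\partial|\sigma|$. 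Since every inclusion in sight is a CW-inclusion, hence a cofibration, the gluing lemma for homotopy equivalences (a levelwise homotopy equivalence of pushout diagrams of cofibrations induces a homotopy equivalence of pushouts) lets me promote the contraction of $\bigcap_{a\in\sigma}U_a$ to a homotopy equivalence of the new piece over $|\sigma|$, compatibly with what has already been built over $\partial|\sigma|$. Finiteness of $\nerve\U$ makes the induction terminate.

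For $\pi_X$: the key observation is that for any $x \in |X|$ the fiber $\pi_X^{-1}(x)$ equals $\{x\}\times|\nerve\U_x|$, where $\U_x := \{a : x \in U_a\}$; because $\U$ is a cover this index set is nonempty, every finite subset of it spans a simplex of $\nerve\U_x$, so $|\nerve\U_x|$ is a full simplex and hence contractible --- and this uses only that $\U$ is a cover, not goodness. Since the elements of $\U$ are subcomplexes over a common vertex set, the assignment $x \mapsto \U_x$ is constant on the relative interior of each simplex of $X$, so the same skeleton-by-skeleton gluing argument as above (now inducting over the simplices of $X$, the relevant fibers being cones) shows $\pi_X$ is a homotopy equivalence.

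The main obstacle I anticipate is not any single computation but making the ``contractible fibers imply homotopy equivalence'' step airtight: this is false in general without a local-triviality or cofibration hypothesis, so the real work is the coherent bookkeeping in the skeletal induction --- choosing the contractions and homotopies over the faces of a simplex so that they assemble consistently over its boundary before extending over the whole simplex, and invoking the gluing lemma at each stage. Keeping everything in the category of finite CW-complexes, so that inclusions are cofibrations and the induction is finite, is exactly what makes this manageable; it is also the ingredient one expects to degrade quantitatively, rather than break, when the intersections are only $\e$-good rather than contractible, which is presumably where the rest of the paper heads.
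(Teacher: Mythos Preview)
The paper does not actually prove this theorem: it is stated in the Background section as a classical result and the reader is pointed to Corollary~4G.3 in Hatcher~\cite{Hatcher01}. Your proposal is precisely the standard proof via the Mayer--Vietoris blowup complex (the homotopy colimit of the diagram of intersections), with the two projections $\pi_X$ and $\pi_N$ shown to be homotopy equivalences. This is the argument in Hatcher~4G.2--4G.3, and indeed the paper explicitly adopts this same blowup-complex machinery (see Section~\ref{sec:nerve_diagram} and Appendix~\ref{sec:hom_colimit}) as the backbone of its own main theorem, citing exactly the fact that $b^\alpha = \pi_{W^\alpha}$ is a homotopy equivalence by Hatcher~4G.2. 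So your sketch is correct and fully aligned with both the cited source and the paper's later constructions; there is simply no in-paper proof to compare against.
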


\subsection{Chain Complexes and Homology}\label{subsec:homology}

We restrict ourselves to chain complexes and homology groups over $\mathbb{Z}/2\mathbb{Z}$ to simplify boundary computations, but the constructions hold for general finite fields as well.
Given a simplicial complex $X$, and a non-negative integer $k$, a \textbf{simplicial $k$-chain} over $\mathbb{Z}/2\mathbb{Z}$ is a formal sum of the form $\sum_i c_i\sigma_i$, where each $c_i\in \mathbb{Z}/2\mathbb{Z}$ and each $\sigma_i$ is a unique $k$-simplex of $X$.
Collectively these simplicial $k$-chains form a vector space/abelian group over $\mathbb{Z}/2\mathbb{Z}$ called the $k$-dimensional \textbf{chain group}, denoted by $C_k(X)$, which has a natural basis consisting of the $k$-simplices of $X$.
Formally, $C_{-1}(X)=0$.
For each $k$ there exists a linear map $\partial_k: C_k\rightarrow C_{k-1}$, called the \textbf{simplicial boundary map}, defined on a $k$-simplex $\sigma= [v_0,\ldots, v_k]$ by $\partial_k(\sigma):= \sum_{i=0}^k [v_0,\ldots, \hat{v_i}, \ldots, v_k]$, where $[v_0,\ldots, \hat{v_i}, \ldots, v_k ]$ is the $i$-th face of $\sigma$ or alternatively the simplex spanned by the vertices of $\sigma$ with $v_i$ removed.
The boundary map extends linearly to arbitrary $k$-chains and has the property that $\partial_k\partial_{k+1}=0$, or $\partial^2=0$ for short.
We denote the sequence of simplicial chain groups $(C_k(X))_{k\geq 0}$ with the appropriate boundary maps as $C_*(X)$ and call it the \textbf{simplicial chain complex} of $X$.

Given a CW-complex $X$, we can similarly define the \textbf{cellular chain complex} $C^{\CW}_*(X):= (C_k^{\CW}(X))_{k\geq 0}$, where each $C_k^{\CW}(X)$ is naturally isomorphic to the vector space over over $\mathbb{Z}/2\mathbb{Z}$ with a basis being the $k$-cells in $X$ .
For each basis $k$-cell $e^k_i$ in $C_k^{\CW}(X)$, there is a boundary map $\partial_k(e^k_i):=\sum_j d_{i,j} e^{k-1}_j$, where $d_{i,j}$ is computed by the cellular boundary formula (see page 140 in Hatcher\cite{Hatcher01} for the exact formula).
For a so-called regular CW-complex, which simplicial complexes and their products are, all the coefficients $d_{i,j}=1$.

Given two simplicial complexes $X$ and $Y$, a \textbf{chain map} $f:C_*(X)\rightarrow C_*(Y)$ is a collection of maps $f=(f_k)_{k\geq 0}$ such that $f_k: C_k(X)\rightarrow C_k(Y)$ is a homomorphism and $f_k \partial_{k+1}= \partial_{k+1} f_{k+1}$ for all $k\geq 0$, i.e. the following diagram commutes.

      \begin{equation*}\label{eq:chain_map}
          \xymatrix{
		\ldots \ar[r] & C_{k+1}(X)~\ar[r]^{\partial_{k+1}}\ar[d]^{f_{k+1}} & C_{k}(X)~\ar[d]^{f_k} \ar[r] &\ldots \\
		\ldots \ar[r] & C_{k+1}(Y)~\ar[r]^{\partial_{k+1}} & C_k(Y) \ar[r] & \ldots  \\
          }
      \end{equation*}

A continuous map $f: X\rightarrow Y$ induced by a map between the vertices of $X$ and $Y$ yields a simplicial chain map $f:C_*(X)\rightarrow C_*(Y)$, and a cellular map $f: X\rightarrow Y$ yields a cellular chain map $f: C^{\CW}_*(X)\rightarrow C^{\CW}_*(Y)$. Given two chain maps $f,g: C_*(X)\rightarrow C_*(Y)$, a $\textbf{chain homotopy}$ between them is a a sequence of maps $c=(c_k)_{k\geq 0}$, $c_k: C_k(X)\rightarrow C_{k+1}(Y)$ such that $c_{k-1} \partial_k+\partial_{k+1} c_k = f_k - g_k$, or  for short, $c \partial +\partial c=f-g$. This is equivalent to Diagram~\ref{eq:chain_map} commuting. We will drop the subscripts when the dimension is clear.
If a chain homotopy exists between $f$ and $g$, then $f$ and $g$ are said to be \textbf{chain homotopic}, written $f\simeq g$.

      \begin{equation*}\label{eq:chain_map}
          \xymatrix{
		\ldots \ar[r] & C_{k+1}(X)~\ar[r]^{\partial_{k+1}}\ar[d]_{f_{k+1}, g_{k+1}} & C_{k}(X)~ \ar[r]^-{\partial_k}\ar[ld]_{c_{k}}\ar[d]_{f_{k},g_{k}}  & C_{k-1}(X) \ar[r]\ar[d]^{f_{k-1},g_{k-1}}\ar[ld]_{c_{k-1}} &\ldots \\
		\ldots \ar[r] & C_{k+1}(Y)~\ar[r]^{\partial_{k+1}} & C_k(Y) \ar[r]^-{\partial_k} & C_{k-1}(Y) \ar[r] &\ldots  \\
          }
      \end{equation*}

Given a simplicial complex $X$ and its simplicial chain complex $C_*(X)$, the $k$-dimensional \textbf{simplicial homology group} as $H_k(X):=\ker\;\partial_k/\im\;\partial_{k+1}$.
The rank of this groups is the number of linearly independent $k$-dimensional holes in the space.
 Collectively, the homology groups of $X$ are denoted by $H_*(X)$.
  If two spaces have the same homotopy type, then their corresponding chain maps are chain homotopic, which then implies their homology groups are isomorphic in all dimensions.
 The \textbf{reduced homology} $\tilde{H}_*(X)$ is the homology computed from the chain complex $C_*(X)$ where one adjoins a copy of $\mathbb{Z}/2\mathbb{Z}$ to $C_{-1}(X)$.
 Functionally this means that $H_0(X)=\tilde{H}_0(X)\bigoplus \mathbb{Z}/2\mathbb{Z}$ so that the one-point space has trivial reduced homology groups over all dimensions.
 
Given two finite CW-complexes $X$ and $Y$, there is a natural isomorphism $C^{\CW}_*(X\times Y)\cong C^{\CW}_*(X)~\otimes~C^{\CW}_*(Y)$, where $\otimes$ is the tensor product.
In particular, $C^{\CW}_k(X\times Y) \cong  \bigoplus_{p+q=k} C^{\CW}_p(X)\otimes C^{\CW}_q(Y)$, where the basis of $C^{\CW}_k(X\times Y)$ is the collection of products $e^p\times f^q$, where $e^p$ is a $p$-cell in $X$ and $f^q$ is a $q$-cell in $Y$ for $p+q=k$.
Each basis elements $e^p\times f^q$ is identified under the isomorphism with $e^p\otimes f^q \in C^{\CW}_p(X)\otimes C^{\CW}_q(Y)$.  
The product chain complex $C_*^{\CW}(X\times Y)$ has the boundary map defined on any basis element $\sigma\otimes\tau \in C_k(X\times Y)$ by $\partial(\sigma\otimes \tau):= \partial( \sigma)\otimes \tau + \sigma\otimes \partial(\tau)\in C^{\CW}_{k-1}(X\times Y)$ and it extends linear to all chains.
 
The cellular homology of a CW-complex $H^{\CW}_*(X)$ is defined in the same manner as simplicial homology except instead with cellular chain complex groups and boundaries.
 In fact, for a finite simplicial complex $X$, $H^{\CW}_*(X)=H_*(X)$, which is a result of the cellular and simplicial chain complexes being canonically isomorphic --- the $k$-simplices are the $k$-cells when $X$ is viewed as a CW-complex.

\subsection{Filtrations and Persistence}

A \textbf{filtration} is a sequence of topological spaces $\mathcal{F} = (F^\alpha)_{\alpha\geq 0}$ such that $F^\alpha\subseteq F^\beta$ if and only if $\alpha\leq\beta$.
If a filtration consists of simplicial complexes, it is known as a simplicial filtration, and if each of the simplicial complexes is finite, it is known as a finite simplicial filtration.
Filtrations often arise as the sublevel sets of a real-valued function on a topological space or simplicial complex.

Persistent homology is the changes in the homology of a filtration as it ranges over the interval $[0,\infty)$.
To be precise, it is the computation of the ``birth" and ``death" scales of homological features under the homology maps induced by inclusion, $i_\mathcal{F}^{\alpha,\beta}:H_*(F^\alpha\hookrightarrow F^\beta)$, for all $\alpha,\beta$ such that $\alpha\leq \beta$.
The persistent homology data of a filtration is contained in its \textbf{persistence module}, denoted by $H_*(\mathcal{F})$, which consists of the spaces $H_*(F^\alpha)$ over all scales, and the aforementioned maps $i_\mathcal{F}^{\alpha,\beta}$ for $\alpha\leq\beta$.
The birth and deaths scales of $k$-dimensional homological features in a filtration $\mathcal{F}$ are represented in a filtration's $k$-dimensional \textbf{persistence diagram}, denoted by $\text{Dgm}_k(\mathcal{F})$.
This is a multiset with elements being points in the plane $(x,y)$, where $x$ and $y$ are the birth and death scales respectively of features, and $(x,x)$ for all $x\in\R$ with infinite multiplicity.
When discussing the persistence diagrams collectively for all dimensions we write $\text{Dgm}(\mathcal{F}):=(\text{Dgm}_k(\mathcal{F}))_{k\geq0}$.
The following theorem provides a condition under which we can say that two filtrations have identical persistence diagrams, often called the Persistence Equivalence Theorem --- see chapter 26 of~\cite{handbookcompgeo17}.
\begin{theorem}[Persistence Equivalence Theorem]\label{thm:persistent_lemma}
Consider two filtrations $\mathcal{F}=(F^\alpha)_{\alpha\geq 0}$, $\mathcal{G}=(G^\alpha)_{\alpha\geq 0}$ with point-wise finite dimensional persistence modules.
 If their persistence modules $H_*(\mathcal{F})$ and $H_*(\mathcal{G})$ are isomorphic then the filtrations have identical persistent homology and $\text{Dgm}(\mathcal{F})=\text{Dgm}(\mathcal{G})$.
\end{theorem}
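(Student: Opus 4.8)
The plan is to observe that an isomorphism of persistence modules preserves the ranks of all the structure maps, and then to recall that for pointwise finite dimensional modules the persistence diagram is completely determined by those ranks. First I would unpack the hypothesis. An isomorphism $\Phi\colon H_*(\mathcal{F})\to H_*(\mathcal{G})$ of persistence modules is a family of isomorphisms $\varphi^\alpha\colon H_*(F^\alpha)\to H_*(G^\alpha)$, one in each homological degree, commuting with the maps induced by inclusion, i.e.\ $\varphi^\beta\circ i_\mathcal{F}^{\alpha,\beta}=i_\mathcal{G}^{\alpha,\beta}\circ\varphi^\alpha$ for all $\alpha\le\beta$. Since each $\varphi^\alpha$ is invertible this rearranges to $i_\mathcal{G}^{\alpha,\beta}=\varphi^\beta\circ i_\mathcal{F}^{\alpha,\beta}\circ(\varphi^\alpha)^{-1}$, and because pre- and post-composing a linear map with isomorphisms leaves its rank unchanged, $\rank i_\mathcal{G}^{\alpha,\beta}=\rank i_\mathcal{F}^{\alpha,\beta}$ for every $\alpha\le\beta$ and in every dimension $k$. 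Hence $\mathcal{F}$ and $\mathcal{G}$ have the same persistent Betti numbers, which is exactly the assertion that they have ``identical persistent homology.''

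It then remains to deduce equality of the diagrams. For a pointwise finite dimensional persistence module the multiplicity of an off-diagonal point $(b,d)$ in $\text{Dgm}_k$ is recovered from the rank function $r_k(\alpha,\beta):=\rank i^{\alpha,\beta}$ by the standard inclusion--exclusion (Möbius inversion) formula: one counts the classes born by time $b$ and still alive at time $d$ and subtracts those already present slightly before $b$ or surviving slightly past $d$, combining the four terms appropriately. The pointwise finite dimensionality is what guarantees that the relevant one-sided limits stabilize, so the formula is well defined and the diagrams are genuinely multisets of finite local multiplicity. Since this expression refers only to the rank function, and the two modules share the same rank function by the previous paragraph, their diagrams agree in every degree, so $\text{Dgm}(\mathcal{F})=\text{Dgm}(\mathcal{G})$, as claimed.

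An alternative, and the route usually taken in textbook treatments such as the Handbook reference, is to invoke the decomposition theory directly: a pointwise finite dimensional persistence module decomposes as a direct sum of interval modules, this decomposition is unique up to isomorphism and reordering (Krull--Remak--Schmidt--Azumaya, via Crawley-Boevey's theorem), and the persistence diagram is by definition the multiset of those intervals; an isomorphism of modules carries one such decomposition to another, so uniqueness forces the interval multisets, hence the diagrams, to coincide. The only genuine ``obstacle'' here is bookkeeping rather than mathematics --- namely deciding how much of the decomposition/structure theory to take as a black box. I would present the rank-invariant argument as the primary proof, since it makes the role of the pointwise finite dimensionality hypothesis transparent, and remark that it collapses to a one-line appeal to uniqueness of interval decompositions if one is willing to assume the structure theorem outright.
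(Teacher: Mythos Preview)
Your argument is correct and standard. However, you should be aware that the paper does not actually supply a proof of this theorem: it is stated in the Background section as a known result, with a pointer to chapter~26 of the Handbook of Discrete and Computational Geometry, and is then used as a black box throughout (e.g.\ to identify $\text{Dgm}(\B)$ with $\text{Dgm}(\W)$ and $\text{Dgm}(\mathcal{N})$ with $\text{Dgm}(\nerve\U)$). So there is no ``paper's own proof'' to compare against.

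That said, both routes you sketch are exactly the arguments one finds in the literature the paper is citing. The rank-invariant version is the one closest in spirit to the classical formulation of the Persistence Equivalence Theorem (as in Edelsbrunner--Harer), while the interval-decomposition version via Crawley-Boevey and Krull--Remak--Schmidt--Azumaya is the perspective emphasized in Chazal et al.'s \emph{Structure and Stability of Persistence Modules}, also cited in the paper's background. Either would be an appropriate justification here; your instinct to lead with the rank argument and note the decomposition route as an alternative is reasonable, since the paper only ever invokes the theorem at the level of ``isomorphic modules $\Rightarrow$ equal diagrams'' without needing the finer structure.
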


Note that $H_*(\mathcal{F})$ is isomorphic to $H_*(\mathcal{G})$ if and only if there are natural isomorphisms $H_*(F^\alpha)\cong H_*(G^\alpha)$ for all $\alpha\geq 0$.

A persistence diagram is finite if it has finitely many off-diagonal points.
The standard metric on the space of persistence diagrams is the bottleneck distance $\dist_B$, which is efficiently computable for finite diagrams.
 For two finite diagrams $D$ and $D'$ it is defined as
\[
\dist_B(D,D'):=\min_{\phi\in\Phi} \max_{p\in D} \|p-\phi(p)\|_{\infty},
\]
where $\Phi$ is the set of all bijections $\phi:D\rightarrow D'$.
Two finite persistent diagrams are equivalent if and only if the bottleneck distance between them is $0$.
A major result in topological data analysis is that the bottleneck distance stable with respect to perturbations of the function generating the diagram, which is known as the Stability Theorem~\cite{cohen-steiner07stability}.

Given two filtrations $\mathcal{F}=(F^\alpha)_{\alpha\geq 0}$, $\mathcal{G}=(G^\alpha)_{\alpha\geq 0}$, their persistence modules $H_*(\mathcal{F})$ and $H_*(\mathcal{G})$ are \textbf{$\delta$-interleaved} if there exists collections of homomorphisms $f= (f^\alpha)_{\alpha\geq 0}$ and  $g: (g^\alpha)_{\alpha\geq 0}$, $f^\alpha: H_*(F^\alpha)\rightarrow H_*(G^{\alpha+\delta})$ and $g^\alpha : H_*(G^\alpha)\rightarrow H_*(F^{\alpha+\delta})$, such that
$g^{\alpha+\delta}f^\alpha=i^{\alpha,\alpha+2 \delta}_{\mathcal{F}}$ and
$f^{\alpha+\delta}g^\alpha=i^{\alpha,\alpha+2\delta}_\mathcal{G}$,
and these maps commute with all $i^{\alpha,\beta}_\mathcal{F}$ and $i^{\alpha,\beta}_\mathcal{G}$ for all $\alpha\leq \beta$.
This is known as an additive interleaving.
 Persistence module interleavings and their persistence diagrams' bottleneck distances are related by the Algebraic Stability Theorem (see~\cite[Thm~4.4]{chazal09proximity}), 

\begin{theorem}[Algebraic Stability Theorem]\label{thm:interleaving_bound}
Given two filtrations $\mathcal{F} = (F^\alpha)_{\alpha\geq 0}$ and $\mathcal{G} = (G^\alpha)_{\alpha\geq 0}$ such that for all $\alpha\geq 0$, $\dim H_*(F^\alpha),\dim H_*(G^\alpha)<\infty$, if $H_*(\mathcal{F})$ and $H_*(\mathcal{G})$ are $\delta$-interleaved then $\dist_B(\text{Dgm}_k(\mathcal{F}),\text{Dgm}_k(\mathcal{G}))\leq\delta$ for all $k$.
\end{theorem}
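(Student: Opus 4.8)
The plan is to deduce the bound from the structure theory of pointwise finite-dimensional (pfd) persistence modules together with an \emph{induced matching} argument. Since $\dim H_*(F^\alpha)<\infty$ and $\dim H_*(G^\alpha)<\infty$ for every $\alpha$, both $H_*(\mathcal F)$ and $H_*(\mathcal G)$ are pfd, so by the decomposition theorem each splits, in each homological degree $k$, as a direct sum of interval modules; the multiset of intervals is the barcode, and it is in bijection with the off-diagonal points of $\text{Dgm}_k$ via $[b,d)\mapsto(b,d)$. It therefore suffices to build, for each $k$, a partial bijection between the two barcodes that displaces every matched interval's birth and death by at most $\delta$ and leaves unmatched only intervals of length at most $2\delta$; completing it with the diagonal turns it into a bijection of diagrams with $\ell^\infty$-displacement $\le\delta$, which is exactly $\dist_B(\text{Dgm}_k(\mathcal F),\text{Dgm}_k(\mathcal G))\le\delta$.

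First I would reinterpret the interleaving as a pair of module morphisms. Writing $M[\delta]$ for the shift $M[\delta]^\alpha:=M^{\alpha+\delta}$, the fact that $f=(f^\alpha)$ commutes with all structure maps says precisely that $f$ is a morphism $H_*(\mathcal F)\to H_*(\mathcal G)[\delta]$, and similarly $g\colon H_*(\mathcal G)\to H_*(\mathcal F)[\delta]$; the interleaving identities $g^{\alpha+\delta}f^\alpha=i^{\alpha,\alpha+2\delta}_{\mathcal F}$ and $f^{\alpha+\delta}g^\alpha=i^{\alpha,\alpha+2\delta}_{\mathcal G}$ then say that the two composites are the $2\delta$-shift morphisms of the respective modules.

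Next I would apply the induced-matching lemma: any morphism $\phi\colon M\to N$ of pfd modules canonically induces a partial matching of the barcodes $\mathcal B(M)$ and $\mathcal B(N)$ whose matched pairs have endpoints controlled by the barcodes of $\ker\phi$ and $\mathrm{coker}\,\phi$, and whose unmatched intervals belong to $\mathcal B(\ker\phi)$ (on the $M$-side) or to $\mathcal B(\mathrm{coker}\,\phi)$ (on the $N$-side). The key observation is that for the morphisms arising from the interleaving these kernels and cokernels are ``$2\delta$-trivial'': because each of $f$ and $g$ composes with the other to a $2\delta$-shift, any interval annihilated by $f$ (resp.\ by $g$), or missed by its image, must have length at most $2\delta$. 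Applying the lemma to $f\colon H_*(\mathcal F)\to H_*(\mathcal G)[\delta]$ and using $g$ to certify the triviality of $\ker f$ and $\mathrm{coker}\,f$ yields a partial bijection between $\mathcal B(H_*(\mathcal F))$ and $\mathcal B(H_*(\mathcal G)[\delta])$ with the numerical properties above; unwinding the $\delta$-shift (which translates every interval of $\mathcal B(H_*(\mathcal G))$ by $\delta$) converts this into the sought $\delta$-matching between $\mathcal B(H_*(\mathcal F))$ and $\mathcal B(H_*(\mathcal G))$, hence into the bottleneck bound.

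The main obstacle is the induced-matching lemma itself --- that a single module morphism produces a well-defined partial matching with endpoints pinned down by its kernel and cokernel. The clean way to establish it is to treat monomorphisms and epimorphisms separately, where the matching is essentially forced (a submodule or quotient of a direct sum of interval modules has a barcode whose intervals interleave term by term with the original), and then to factor an arbitrary $\phi$ as an epimorphism onto its image followed by a monomorphism. An alternative, decomposition-free route is the original ``box lemma'' argument of Chazal et al.: using only ranks of structure maps and the interleaving identities one shows that the multiplicity of $\text{Dgm}_k(\mathcal F)$ in any axis-parallel rectangle $R$ is at least that of $\text{Dgm}_k(\mathcal G)$ in the $\delta$-eroded rectangle (and symmetrically), and then feeds these box inequalities into a Hall-type argument to extract the bijection; this is more self-contained but the rank bookkeeping is heavier. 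In both routes, pointwise finite-dimensionality is exactly what keeps the relevant ranks finite and makes the resulting matchings honest bijections away from the diagonal.
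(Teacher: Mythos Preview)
Your proposal is a reasonable sketch of how the Algebraic Stability Theorem is actually proved in the literature, but there is nothing to compare it to here: the paper does not prove this statement. Theorem~\ref{thm:interleaving_bound} appears in the background section as a cited result (attributed to \cite[Thm~4.4]{chazal09proximity}); the paper invokes it as a black box in the final step of the proof of Theorem~\ref{thm:gpnl2} and never supplies its own argument. So you have written a proof where the paper simply gives a reference.

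For what it is worth, your outline is accurate. The induced-matching route you describe is the Bauer--Lesnick approach, and the ``box lemma'' alternative you mention is essentially the original argument of Chazal--Cohen-Steiner--Glisse--Guibas--Oudot that the paper cites. Either would serve if one wanted to make the paper self-contained, but the authors evidently did not intend that; the theorem is background, not a contribution.
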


Let $\U := \{U_0,\ldots,U_n\}$ be a collection of simplicial filtrations, where $U_i := (U_i^\alpha)_{\alpha\ge 0}$ and for all $i$ and $\alpha$, $U_i^\alpha$ is a finite subcomplex of an ambient simplicial complex.
Let $\U^\alpha := \{U_0^\alpha,\ldots,U_n^\alpha\}$,  the collection of simplicial complexes at scale $\alpha$ from each filtration $U_i$.
For each non-empty $v\subseteq [n]$, let $U_v^\alpha:= \bigcap_{i \in v}U_i^\alpha$ yielding a simplicial filtration $U_v := (U_v^\alpha)_{\alpha\ge 0}$.
Note that in this notation $U_{\{i\}}^\alpha = U_i^\alpha$ and $U_{\{i\}} = U_i$.
For a collection of filtrations $\mathcal{U}$ there is an associated \textbf{nerve filtration} $\nerve \U := (\nerve (\U^\alpha))_{\alpha\ge 0}$.
For each scale the union of over the elements of $\U^\alpha$ is the simplicial complex defined as $W^\alpha := \bigcup_{i=0}^n U_i^\alpha$ and the the \textbf{union filtration} with respect to $\U$ is denoted by $\W := (W^\alpha)_{\alpha\ge 0}$.
For each $W^\alpha$, $\mathcal{U}^\alpha$ is a cover and thus we say that $\mathcal{U}$ is a cover filtration of $\mathcal{W}$, or cover for short.
We call $\U$ a \textbf{good cover filtration}, or good cover for short, of $\W$ if $\U^\alpha$ is a good cover of $W^\alpha$ for all $\alpha \ge 0$.

 The  previous definitions allow for the statement of the Persistent Nerve Lemma which the main theorem of this paper generalizes.
    The following lemma was originally formulated by Fr\'ed\'eric Chazal and Steve Oudot in \cite{chazal08towards} as a generalization of the Nerve Theorem to filtrations.

 \begin{lemma}\label{lem:persistent_nerve_lem}
 Let $X\subseteq X'$ be two finite simplicial complexes with good covers, $\mathcal{V}=\{V_\alpha\}_{\alpha\in A}$ and $\mathcal{V}'=\{V'_\alpha\}_{\alpha\in A}$ respectively, such that $V_\alpha\subseteq V'_\alpha$ for all $\alpha\in A$.
  There exists homotopy equivalences $|\nerve\mathcal{V}|\rightarrow X$ and $|\nerve\mathcal{V}'|\rightarrow X'$ that commute with the topological inclusions $X\hookrightarrow X'$ and $|\nerve\mathcal{V}|\hookrightarrow |\nerve\mathcal{V}'|$.
 \end{lemma}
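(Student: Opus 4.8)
The plan is to derive the Persistent Nerve Lemma from the \emph{Mayer--Vietoris blow-up complex} together with its naturality in the covered complex. For a finite simplicial complex $X$ with closed cover $\mathcal{V}=\{V_a\}_{a\in A}$, write $V_\sigma:=\bigcap_{a\in\sigma}V_a$ for $\sigma\in\nerve\mathcal{V}$ and set
\[
  \mathcal{D}(X,\mathcal{V}) := \bigcup_{\sigma\in\nerve\mathcal{V}} V_\sigma\times|\sigma| \;\subseteq\; X\times|\nerve\mathcal{V}|,
\]
which is a subcomplex of the product, hence a regular CW-complex, and carries two projection maps $\pi_X:\mathcal{D}(X,\mathcal{V})\to X$ and $\pi_{\mathcal N}:\mathcal{D}(X,\mathcal{V})\to|\nerve\mathcal{V}|$. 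Since $V_a\subseteq V'_a$ for all $a$ we get $V_\sigma\subseteq V'_\sigma$ and $\nerve\mathcal{V}\subseteq\nerve\mathcal{V}'$, hence a subcomplex inclusion (in particular a cofibration) $\mathcal{D}(X,\mathcal{V})\hookrightarrow\mathcal{D}(X',\mathcal{V}')$ for which both projection squares --- the one over $X\hookrightarrow X'$ and the one over $|\nerve\mathcal{V}|\hookrightarrow|\nerve\mathcal{V}'|$ --- commute strictly (each $\pi_X$, $\pi_{\mathcal N}$ is just a coordinate projection).

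First I would show that $\pi_X$ is a homotopy equivalence for \emph{any} closed cover. The cleanest route is to identify $\mathcal{D}(X,\mathcal{V})$ with the homotopy colimit of the diagram $\sigma\mapsto V_\sigma$ indexed by the face poset of $\nerve\mathcal{V}$; then $\pi_X$ is the canonical comparison map from this homotopy colimit to its ordinary colimit, which is $\bigcup_a V_a = X$, and this map is a homotopy equivalence because $\mathcal{V}$ is a cover of $X$ by subcomplexes, so the diagram is cofibrant and all structure maps $V_\tau\hookrightarrow V_\sigma$ are cofibrations (the projection lemma for homotopy colimits of closed CW covers). Next I would show that $\pi_{\mathcal N}$ is a homotopy equivalence \emph{when $\mathcal{V}$ is good}. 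From the same viewpoint, $\pi_{\mathcal N}$ is the map of homotopy colimits induced by the natural transformation $(\sigma\mapsto V_\sigma)\Rightarrow(\sigma\mapsto\mathrm{pt})$, whose components $V_\sigma\to\mathrm{pt}$ are homotopy equivalences precisely because each $V_\sigma$ is contractible; a levelwise equivalence of diagrams induces an equivalence on homotopy colimits, and $\hocolim(\sigma\mapsto\mathrm{pt})$ over the face poset of $\nerve\mathcal{V}$ is $|\mathrm{sd}\,\nerve\mathcal{V}|\cong|\nerve\mathcal{V}|$. (Equivalently, one may argue directly: $\pi_{\mathcal N}^{-1}$ of each open star deformation retracts onto the corresponding $V_\sigma$, and conclude by the Nerve Theorem applied to $\mathcal{D}(X,\mathcal{V})$.) This is the one place the good-cover hypothesis is used, and it is the main obstacle; everything else is formal.

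Finally I would assemble the pieces with attention to naturality across $X\subseteq X'$. Consider the map of CW-pairs
\[
  \pi_{\mathcal N}\colon \bigl(\mathcal{D}(X',\mathcal{V}'),\,\mathcal{D}(X,\mathcal{V})\bigr)\longrightarrow\bigl(|\nerve\mathcal{V}'|,\,|\nerve\mathcal{V}|\bigr).
\]
By the previous step both components are homotopy equivalences, and both vertical inclusions are cofibrations, so by the standard criterion for a map of cofibration pairs, $\pi_{\mathcal N}$ is a homotopy equivalence of pairs: there is a map $s\colon|\nerve\mathcal{V}'|\to\mathcal{D}(X',\mathcal{V}')$ restricting to $s_0\colon|\nerve\mathcal{V}|\to\mathcal{D}(X,\mathcal{V})$, with $\pi_{\mathcal N}s\simeq\id$, $\pi_{\mathcal N}s_0\simeq\id$, and --- crucially --- $s$ commuting strictly with the two inclusions $|\nerve\mathcal{V}|\hookrightarrow|\nerve\mathcal{V}'|$ and $\mathcal{D}(X,\mathcal{V})\hookrightarrow\mathcal{D}(X',\mathcal{V}')$. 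Setting $h:=\pi_X\circ s_0\colon|\nerve\mathcal{V}|\to X$ and $h':=\pi_{X'}\circ s\colon|\nerve\mathcal{V}'|\to X'$, each is a composite of homotopy equivalences hence itself a homotopy equivalence, and since both $\pi_X$-squares and $s$ commute strictly with the inclusions we get $h'\circ\bigl(|\nerve\mathcal{V}|\hookrightarrow|\nerve\mathcal{V}'|\bigr)=\bigl(X\hookrightarrow X'\bigr)\circ h$ on the nose, which in particular yields the required commutation up to homotopy. The only genuinely nontrivial inputs are the two projection lemmas of the second paragraph; I expect verifying $\pi_{\mathcal N}$ is an equivalence (where ``good'' enters) to be the crux, while the compatibility over $X\subseteq X'$ is handled cleanly by the cofibration-pair argument.
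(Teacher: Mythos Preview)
Your proposal is correct and follows precisely the approach the paper indicates (though the paper does not spell it out, attributing the lemma to Chazal--Oudot): it is the Mayer--Vietoris blow-up complex $B^\alpha$ of Section~\ref{sec:proof_construction} with its two projections $b^\alpha$ and $p^\alpha$, whose homotopy-equivalence status the paper invokes (citing Hatcher 4G.2 for $b^\alpha$, and the good-cover hypothesis for $p^\alpha$) together with the observation that both ``commute with the filtration inclusions.'' Your cofibration-pair argument to upgrade this to a strictly commuting homotopy inverse $s$ is exactly the standard completion of that sketch.
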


Viewing each $\U^\alpha$ from a good cover filtration $\U$ as a good cover of $W^\alpha$,  Theorem~\ref{thm:persistent_lemma} can be applied to the construction of the homotopy equivalences in the proof of Lemma~\ref{lem:persistent_nerve_lem} to achieve the following fundamental persistent homology result.

 \begin{theorem}[Persistent Nerve Lemma]\label{thm:persistent_nerve_dgm}
Given a collection of finite simplicial filtrations $\U$  where $\U$ is a good cover filtration of $\W$, then $\text{Dgm}(\nerve \U)=\text{Dgm}(\W)$.
    \end{theorem}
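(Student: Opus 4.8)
The plan is to derive Theorem~\ref{thm:persistent_nerve_dgm} from the classical homotopy-theoretic Persistent Nerve Lemma, Lemma~\ref{lem:persistent_nerve_lem}, together with the Persistence Equivalence Theorem, Theorem~\ref{thm:persistent_lemma}. Fix a good cover filtration $\U$ of $\W$. For each scale $\alpha \ge 0$ the collection $\U^\alpha = \{U_0^\alpha,\ldots,U_n^\alpha\}$ is, by hypothesis, a good cover of the finite simplicial complex $W^\alpha = \bigcup_i U_i^\alpha$, so the construction in the proof of Lemma~\ref{lem:persistent_nerve_lem} produces a homotopy equivalence $h^\alpha \colon |\nerve \U^\alpha| \to W^\alpha$. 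Because $W^\alpha$ is a finite simplicial complex, $H_*(W^\alpha)$ is finite-dimensional over $\mathbb{Z}/2\mathbb{Z}$, and likewise $H_*(|\nerve \U^\alpha|) \cong H_*(\nerve \U^\alpha)$ is finite-dimensional; hence both persistence modules $H_*(\nerve\U)$ and $H_*(\W)$ are point-wise finite dimensional, as required for Theorem~\ref{thm:persistent_lemma}. Applying $H_*$ to each $h^\alpha$ yields isomorphisms $h^\alpha_* \colon H_*(\nerve \U^\alpha) \xrightarrow{\ \cong\ } H_*(W^\alpha)$.

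The heart of the argument is that these isomorphisms assemble into an isomorphism of persistence modules, and this is precisely the ``commuting with inclusions'' clause of Lemma~\ref{lem:persistent_nerve_lem}. For $\alpha \le \beta$ the componentwise inclusions $U_i^\alpha \subseteq U_i^\beta$ induce a simplicial inclusion $\nerve \U^\alpha \hookrightarrow \nerve \U^\beta$ and the topological inclusion $W^\alpha \hookrightarrow W^\beta$, and the square relating $h^\alpha$ and $h^\beta$ to these inclusions commutes (strictly, or at least up to homotopy, which is all that matters after applying $H_*$). Consequently the diagram
\[
\xymatrix{
H_*(\nerve \U^\alpha) \ar[r]^-{h^\alpha_*}_-{\cong} \ar[d]_{i^{\alpha,\beta}_{\nerve\U}} & H_*(W^\alpha) \ar[d]^{i^{\alpha,\beta}_{\W}} \\
H_*(\nerve \U^\beta) \ar[r]^-{h^\beta_*}_-{\cong} & H_*(W^\beta)
}
\]
commutes for all $\alpha \le \beta$, so the family $(h^\alpha_*)_{\alpha \ge 0}$ is an isomorphism $H_*(\nerve\U) \cong H_*(\W)$ of persistence modules. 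Theorem~\ref{thm:persistent_lemma} then gives that the two filtrations have identical persistent homology and $\text{Dgm}(\nerve \U) = \text{Dgm}(\W)$, which is the claim. (Equivalently, once the paper's main theorem is in hand one may simply instantiate it at $\e = 0$, since a good cover is a $0$-good cover and a $0$-interleaving of persistence modules is an isomorphism; but the route above uses only what is already available.)

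The step I expect to demand the most care is the coherence of the maps $h^\alpha$ across \emph{all} scales at once: Lemma~\ref{lem:persistent_nerve_lem} as stated asserts naturality only for a single inclusion pair, whereas here one needs a single family $(h^\alpha)_{\alpha\ge 0}$ compatible with every inclusion $\alpha \le \beta$ simultaneously, i.e.\ a natural transformation of functors on the poset $[0,\infty)$. The way around this is to note that the homotopy equivalence in the proof of Lemma~\ref{lem:persistent_nerve_lem} is built functorially from the cover --- for instance via the projections from the Mayer--Vietoris blowup complex (the homotopy colimit of the cover diagram) onto $|\nerve \U^\alpha|$ and onto $W^\alpha$ --- so that a morphism of covers, such as the componentwise inclusion $\U^\alpha \hookrightarrow \U^\beta$, induces a commuting ladder of these projections. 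Granting this, the squares above commute after passing to homology and no further choices are needed; this is exactly the sense in which ``Theorem~\ref{thm:persistent_lemma} can be applied to the construction of the homotopy equivalences in the proof of Lemma~\ref{lem:persistent_nerve_lem}.'' A minor auxiliary point is the standard identification $H_*(|\nerve\U^\alpha|) = H_*(\nerve\U^\alpha)$ of the singular homology of the realization with the simplicial homology of the abstract nerve, already used implicitly in the background section.
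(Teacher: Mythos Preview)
Your proposal is correct and follows exactly the route the paper indicates: apply Lemma~\ref{lem:persistent_nerve_lem} at each scale to obtain homotopy equivalences that commute with the filtration inclusions, then invoke the Persistence Equivalence Theorem (Theorem~\ref{thm:persistent_lemma}) to conclude equality of diagrams. Your additional care about coherence of the $h^\alpha$ across all scales, and your remark that the functorial blowup-complex construction supplies this, merely makes explicit what the paper's one-sentence justification leaves implicit.
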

\section{The Generalized Persistent Nerve Theorem}\label{sec:gen_nerve_thm}
  In the Persistent Nerve Lemma a primary assumption is that $\U^\alpha:= \{U_0^\alpha,\ldots, U_n^\alpha\}$ is a good cover of $W^\alpha:= \bigcup_{i=0}^n U_i^\alpha$ for all $\alpha$, i.e. $\U$ is a good cover filtration of $\W$.
  However, there are common situations where a simplicial cover may not be good as shown in the figures below.

  \begin{figure}[h]
  \label{fig:bump}
    \centering
    \includegraphics[width=0.5\textwidth]{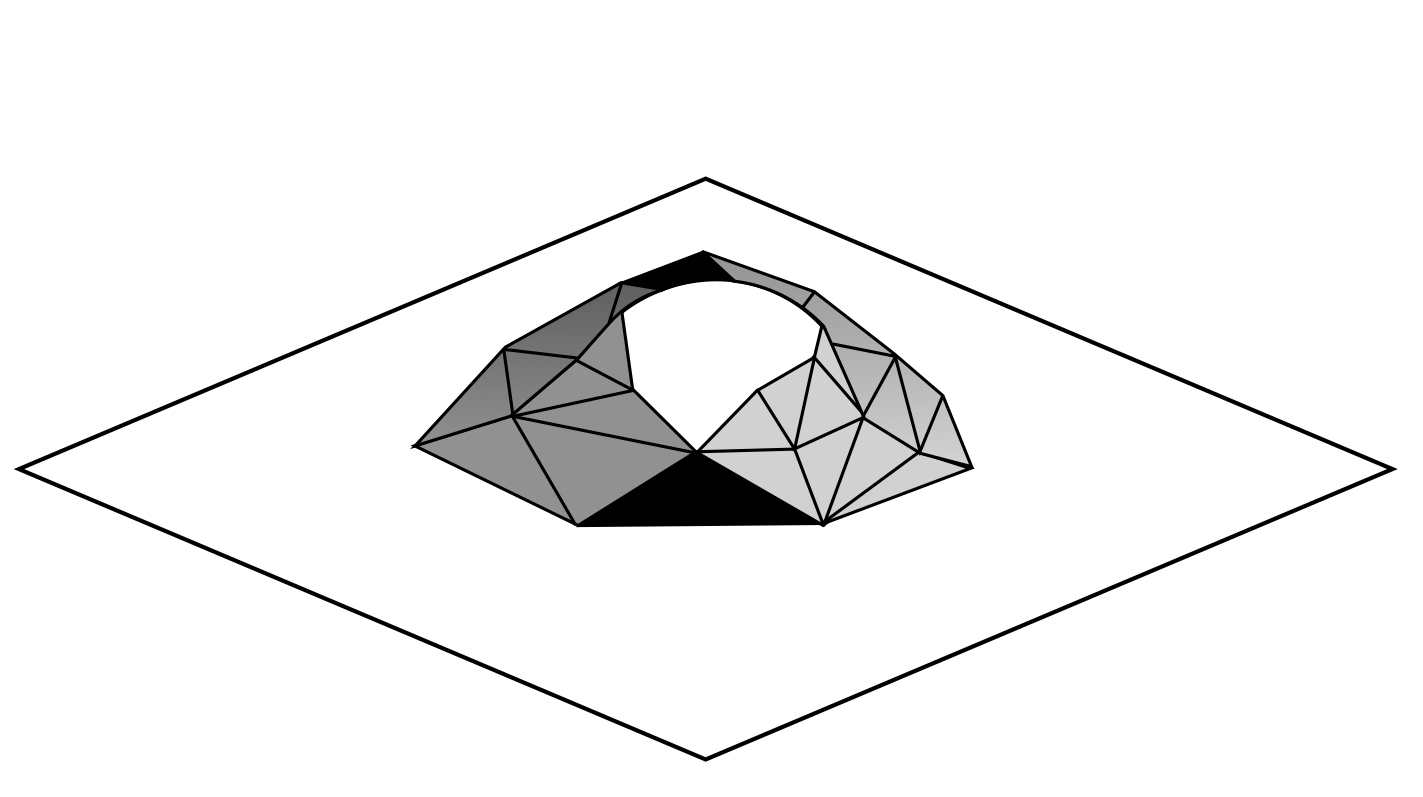}
    \caption{A highlighted portion of a triangulation of a surface with a bump where two simplicial complexes intersect in two connected components on the side (in black), thus they are not a good cover.
    Growing the two cover elements results in their intersection being contractible.}
  \end{figure}
  
  \begin{figure}[t]
  \label{fig:mesh}
    \centering
    \includegraphics[width=0.5\textwidth]{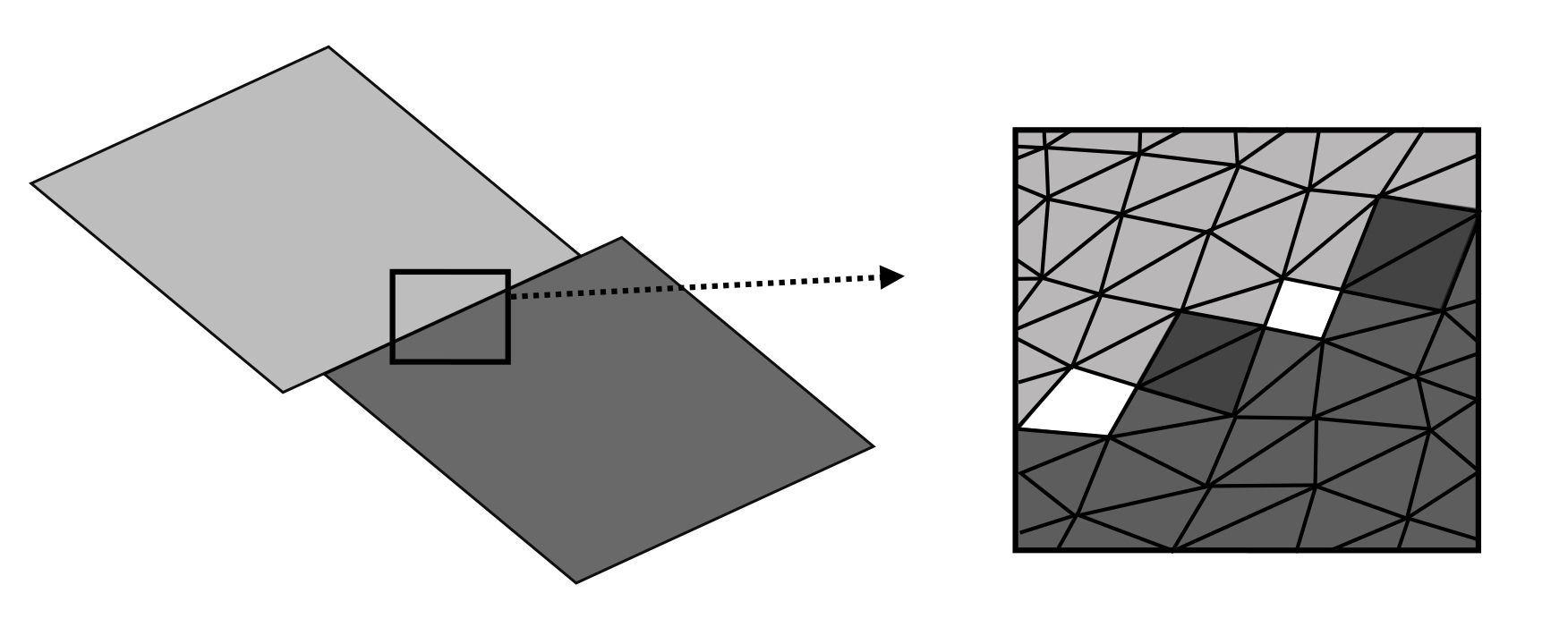}
    \caption{On the left is a space covered by two quadrilaterals.
    Zooming in on a triangulation of the cover elements, their intersection is not contractible by a small margin potentially due to an approximation error.}
  \end{figure}
  
  In both cases these good cover violations are relatively small and intuitively ought to be able to be made insignificant for purposes of, for example, recovering the homotopy type of a triangulated space.
  Persistent homology is an ideal theory for quantifying what is meant by a small violation of the good cover condition.
 The following is our natural generalization of a good cover filtration.
 
  \begin{definition}
  A cover filtration $\mathcal{U}$ is an \textbf{$\e$-good cover} of a filtration $\mathcal{W}=(\bigcup_{i=0}^nU_i^\alpha)_{\alpha\geq 0}$ if for all non-empty $v\subseteq[n]$, and all $\alpha\geq 0$, $\tilde{H}_*(U_v^\alpha\hookrightarrow U_v^{\alpha+\e})=0$.
 \end{definition}

  \begin{figure}[t]
    \centering
    \includegraphics[width=0.6\textwidth]{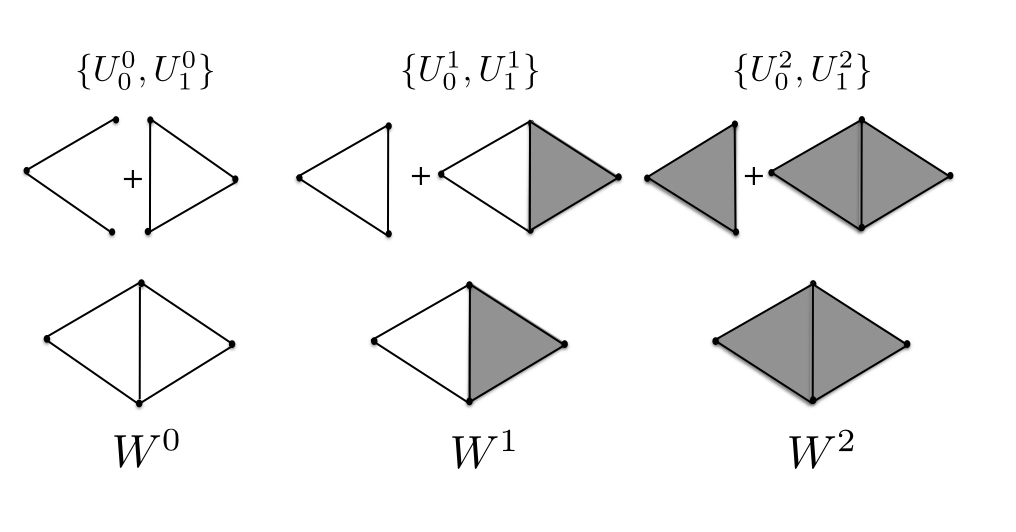}
    \caption{The simplicial cover filtration $\U$ is not a good cover at scales $0$ and $1$, but is $1$-good.
    The images of the inclusions from each intersection of covers has trivial homology at the next scale.}
    \label{fig:eps_good_cover}
  \end{figure}

Though the definition of an $\e$-good covers is stated in terms of homology of the inclusions of cover intersections, it is in fact weaker than the assumption that $\U^\alpha_v\hookrightarrow U^{\alpha+\e}_v$ is null-homotopic, which would align better with the traditional notion of a good cover.
We choose to still use the term ``good" despite this choice.
The main goal of this paper is providing a theorem concerning persistent homology so nothing would be gained by working at the homotopy level.

  There is the following relation between the definitions of an $\e$-good cover and a good cover.
  If $\U$ is a good finite simplicial cover then for each each non-empty intersection of cover elements $U_v^\alpha$ of $\U^\alpha$,  $U_v^\alpha$ is homotopy equivalent to a point.
  This implies that $\im~ \tilde{H}_*(U_v^\alpha\hookrightarrow U_v^{\alpha+\e})\subseteq \tilde{H}_*(U_v^{\alpha+\e})= 0$, so $\U$ is a $0$-good cover.
However, the converse does not hold --- the $2$-skeleton of the Poincar\'e $3$-sphere has trivial reduced homology groups but is not contractible.

  Recall the definition of the nerve filtration $\nerve\U:= (\nerve\U^\alpha)_{\alpha\geq 0}$.
  The following theorem called the Generalized Persistent Nerve Theorem provides a tight bound  of $(K+1)\e$ on the bottleneck distance between the $K$-dimensional persistence diagrams of the nerve filtration and a simplicial cover filtration, given that $\U$ is an $\e$-good cover filtration of $\W$.

  \begin{theorem}[Generalized Persistent Nerve Theorem]\label{thm:gpnl}
      Given a finite collection of finite simplicial filtrations $\U = \{U_0,\ldots,U_n\}$, where  $U_i:=(U_i^\alpha)_{\alpha\geq 0}$ and all $U_i^\alpha$ are subcomplexes of a sufficiently large simplicial complex,  if $\U$ is an $\e$-good cover filtration of $\W := (\bigcup_{i=0}^n U_i^\alpha)_{\alpha\geq 0}$, then
      \[
        \dist_B(\text{Dgm}_K(\W), \text{Dgm}_K(\nerve \U)) \le (K+1)\e.
      \]
  \end{theorem}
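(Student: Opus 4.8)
# Proof Proposal

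The plan is to realize the bound via the Algebraic Stability Theorem (Theorem~\ref{thm:interleaving_bound}), by constructing a $(K+1)\e$-interleaving between the persistence modules $\hom_*(\W)$ and $\hom_*(\nerve\U)$ in each dimension $K$. The classical Nerve Theorem is proved by mediating through the \emph{blow-up complex} (the ``Mayer--Vietoris blow-up''), the subspace of $|\W| \times |\nerve\U|$ consisting of pairs $(x,\sigma)$ with $x \in U_\sigma$; it projects to both $|\W|$ and $|\nerve\U|$, and when the cover is good both projections are homotopy equivalences via an acyclic-carrier / nerve-of-a-cover-of-the-blow-up argument. My approach is to carry out this strategy at the chain level, but to allow the good-cover hypothesis to fail by $\e$: rather than producing genuine homotopy inverses, I will produce chain maps whose composites are only chain-homotopic to the scale-$(K+1)\e$ inclusion, with the ``$(K+1)$'' coming from the fact that the obstruction to acyclicity lives in the $K$-th term of a filtration/spectral-sequence-type argument on the blow-up and each of the $K+1$ pages (or $K+1$ Mayer--Vietoris-style steps) costs one $\e$.

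Concretely, I would proceed as follows. First, fix $\alpha$ and work with the barycentric decomposition $\bary$ of the blow-up complex of $\U^\alpha$; this is a regular CW- (indeed simplicial) complex, so all chain maps below are simplicial/cellular and hence computable. Second, establish the ``easy'' projection: the map from the blow-up to the nerve is a homotopy equivalence with \emph{no} goodness hypothesis (its fibers over a simplex $\sigma$ of $\nerve\U$ are $U_\sigma$, which are nonempty but need not be contractible --- however, the nerve-direction projection is always a homotopy equivalence because one can deformation-retract each fiber's ``$|\W|$-coordinate'' onto the barycenter data; more carefully, the projection $\bary(\text{blow-up}) \to |\nerve\U|$ admits a section and a fiberwise acyclic carrier, giving a chain-level inverse up to chain homotopy with zero scale shift). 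Third, handle the ``hard'' projection to $|\W^\alpha|$: here the fibers are nerves of subcovers and the obstruction to contractibility is exactly what the $\e$-good hypothesis controls. Using the filtration of the blow-up by the ``cover-dimension'' $p$ (the poset of $\sigma \in \nerve\U$ of size $\le p+1$), one builds a chain map $|\W^\alpha| \to \bary(\text{blow-up at scale }\alpha+(K+1)\e)$ by \emph{lifting} cycles skeleton-by-skeleton: a $K$-cycle in $W^\alpha$ is lifted through the $(K+1)$ stages of the filtration, and at each stage the obstruction to lifting lands in $\tilde{\hom}_*(U_v^{\beta} \hookrightarrow U_v^{\beta+\e})$, which vanishes by hypothesis --- consuming one $\e$ per stage, for a total of $(K+1)\e$. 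Fourth, assemble: the composite blow-up $\to |\W| \to$ blow-up and $|\W| \to$ blow-up $\to |\W|$ are chain-homotopic to the appropriate shifted inclusions, and composing with the (shift-free) nerve projection and its inverse yields maps $f^\alpha : \hom_K(W^\alpha) \to \hom_K(\nerve\U^{\alpha+(K+1)\e})$ and $g^\alpha$ in the other direction with $g f$ and $f g$ equal to the module inclusions $i^{\alpha,\alpha+2(K+1)\e}$. Naturality with all inclusions $i^{\alpha,\beta}$ follows because every map in sight is induced by a simplicial/inclusion map, so the interleaving is genuine and Theorem~\ref{thm:interleaving_bound} gives the bound.

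The main obstacle is the third step: making precise the ``lifting'' construction and the bookkeeping that each of the $K+1$ stages of the blow-up filtration costs exactly one $\e$, not more. One must be careful that the chain homotopies produced at stage $p$ (using that $\tilde\hom_*(U_v^\beta \hookrightarrow U_v^{\beta+\e}) = 0$ only gives a chain null-homotopy \emph{after} including into scale $\beta+\e$, not at scale $\beta$) compose correctly across stages without the $\e$'s accumulating faster than linearly --- i.e., that the total shift telescopes to $(K+1)\e$ rather than, say, $2^K \e$ or $\binom{n}{K}\e$. This requires organizing the induction so that at stage $p$ one only ever needs the goodness of $p$-fold (or fewer) intersections and shifts by a single $\e$, then passes the result along. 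A secondary subtlety is verifying that the nerve-direction projection really is a chain homotopy equivalence with \emph{zero} shift and no hypothesis on the cover; this is where the asymmetry of the final bound comes from, and it should follow from a direct acyclic-carrier argument (the carrier assigning to each simplex of $|\nerve\U|$ its fiber $U_\sigma$ is acyclic because $U_\sigma \ne \emptyset$ and one retracts onto a chosen vertex), but it needs to be stated carefully for the barycentric model. Everything else --- the reduction to interleavings, the passage from chain-homotopy-level ``pseudo-interleaving'' to a genuine homology interleaving, and the final invocation of Theorem~\ref{thm:interleaving_bound} --- is routine once these two geometric inputs are in hand. Tightness of the $(K+1)\e$ bound would be shown separately by an explicit example filtration.
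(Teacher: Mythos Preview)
Your overall architecture---mediate through the barycentric blow-up, build chain maps whose composites are chain-homotopic to shifted inclusions, then invoke Theorem~\ref{thm:interleaving_bound}---matches the paper. However, you have the two projections reversed, and this is not cosmetic.

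The projection $B^\alpha \to W^\alpha$ is the one that is \emph{always} a homotopy equivalence, with no hypothesis on the cover: its fiber over a point $x \in W^\alpha$ is the subcomplex of $|N^\alpha|$ spanned by those $v$ with $x \in U_v^\alpha$, which is a full simplex (the indices containing $x$ are closed under subsets) and hence contractible. This is the standard fact (Hatcher~4G.2) the paper invokes to get $\text{Dgm}(\B)=\text{Dgm}(\W)$ for free. Conversely, the projection $B^\alpha \to |N^\alpha|$ has fiber $U_\sigma^\alpha$ over the barycenter of $\sigma$, and these are exactly the spaces whose acyclicity fails without the good-cover hypothesis. Your acyclic-carrier sketch (``$U_\sigma \ne \emptyset$ and one retracts onto a chosen vertex'') is simply false: nonemptiness does not give a retraction, and you yourself note two sentences earlier that $U_\sigma$ ``need not be contractible.''

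Once you swap the roles, your plan becomes the paper's: the hard direction is to produce a map \emph{from the nerve back toward $\W$} (or the blow-up) with a $(K+1)\e$ shift. The paper does this by first building $q^\alpha : C_*(|N^\alpha|) \to C_*(W^{\alpha+(K+1)\e})$ by composing the local chain homotopies $c_v^\alpha$ (each coming from the $\e$-good condition on a single $U_v$) along a flag $v_0\to\cdots\to v_k$, and then ``lifting'' $q^\alpha$ to $\widehat{q}^\alpha : C_*(|N^\alpha|) \to C_*(B^{\alpha+(K+1)\e})$ via the Alexander--Whitney diagonal. The linear accumulation of $\e$'s that worries you is controlled precisely because a $k$-simplex of $N^\alpha$ is a chain of $k{+}1$ nested intersections, so the composite $c_{v_k}^{\alpha+k\e}\cdots c_{v_0}^\alpha$ shifts by exactly $(k{+}1)\e \le (K{+}1)\e$; there is no exponential or binomial blow-up because the homotopies are applied in sequence along the flag, not over all subsets. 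Your ``skeleton-by-skeleton lifting from $W$'' would, after the swap, become this flag-wise construction on the nerve side.
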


As Theorem~\ref{thm:gpnl} is true for all dimensions it implies that $\dist_B(\text{Dgm}(\W),\text{Dgm}(\nerve\U))\leq (D+1)\e$ for $D :=\dim\nerve \U$, as well as implying Theorem~\ref{thm:persistent_nerve_dgm} (the Persistent Nerve Lemma) for the case of finite simplicial filtrations.
See Appendix~\ref{sec:tight_bound} for a construction that realizes the bottleneck distance bound over all dimensions.

The Generalized Persistent Nerve Theorem can be seen as an extension of the Persistent Nerve Lemma analogous to how the Algebraic Stability Theorem for persistence modules extends the Persistence Equivalence Theorem to interleaved modules, by viewing $\e$-good cover filtrations as perturbations or approximations of good cover filtrations, the ideal object.
 This relationship is summarized in the following table.
 \\
\begin{center}
\begin{tabular}{ | c || c | c | }
\hline
       & \emph{Equivalence} & \emph{Approximation} \\
\hline\hline
 \emph{Persistence Modules} & Persistence Equivalence Theorem & Stability Theorem \\
\hline
\emph{Nerves} & Persistent Nerve Lemma & \textbf{Gen. Persistent Nerve Theorem} \\
\hline
\end{tabular}
\end{center}

\section{Proof Construction}
\label{sec:proof_construction}
 Fix a cover filtration $\U := \{U_0,\ldots,U_n\}$ consisting of finite simplicial filtrations, where $U_i:= (\bigcup_{i\in[n]} U_i^\alpha)_{\alpha\ge 0}$ and all the $U_i^\alpha$ are subcomplexes of some sufficiently large simplicial complex.
 Assume that $\U$ is an $\e$-good cover filtration of the simplicial filtration $\W: = (W^\alpha)_{\alpha\geq 0} := (\bigcup_{i=0}^n U_i^\alpha)_{\alpha\geq 0}$.
 Fix a dimension $K$ for the remainder of the proof which will the be maximal dimension considered when discussing chain complexes, i.e. $C_*(X):=(C_k(X))_{k\leq K}$ for any space $X$ and likewise for homology groups.
 
The procedure to prove Theorem~\ref{thm:gpnl} is as follows. 
First we construct a diagram of chain complexes and chain maps that yield a $(K+1)\e$-interleaving between the filtered chain complexes $C_k(\W)$ and $C_k(\nerve \U)$ for all $k\leq K$.
This chain complex interleaving is analogous to that defined previously between persistence diagrams, except one we only require the appropriate compositions be chain homotopic to the identity chain maps rather than equivalent.
Applying homology to said chain complex diagram, the chain maps that are chain homotopic to the identity become homologically equivalent to identity maps so there is a true $(K+1)\e$-interleaving between the persistence modules $H_k(\W)$ and $H_k(\nerve\U)$ for each $k\leq K$.
The theorem is proved then by applying the Algebraic Stability Theorem (Theorem~\ref{thm:interleaving_bound}).

\subsection{The Nerve Diagram and Blowup Complex}\label{sec:nerve_diagram}
For each $\alpha \ge 0$, define $N^\alpha$ as the directed graph with vertex set $\{ \textrm{non-empty } v \subseteq [n] \mid U_v^\alpha := \bigcap_{i\in v} U_i^\alpha\neq\emptyset \}$, and edges of the form $(v',v)$ for any non-empty $v, v'\subseteq [n]$ such  $v\subset v'$.
Note the vertices of this graph are in correspondence with the simplices of $\nerve\U^\alpha$ and thus form the $0$-skeleton of the barycentric subdivision of $\nerve \U^\alpha$, while the entire 
(undirected) graph is its $1$-skeleton.
The edges correspond to inclusions between intersections of cover elements of the form $U_{v'}^\alpha\hookrightarrow U_v^\alpha$.

$N^\alpha$ can be given the structure of an abstract simplicial complex, where the $k$-simplices are sequences of vertices $v_0,\ldots,v_k$ such that for $i$ and $j$, $0\leq i<j\leq k$, there exists an edge $(v_i, v_j)$.
Some call this complex built from an acyclic graph or poset a flag complex.
From now on we will use the notation $N^\alpha$ to refer to the simplicial representation.
A fact that will be important later on is that its geometric realization $|N^\alpha|$ is homeomorphic as a topological space to $|\nerve\U^\alpha|$.
\begin{figure}[h]
\centering
    \includegraphics[width=.9\textwidth]{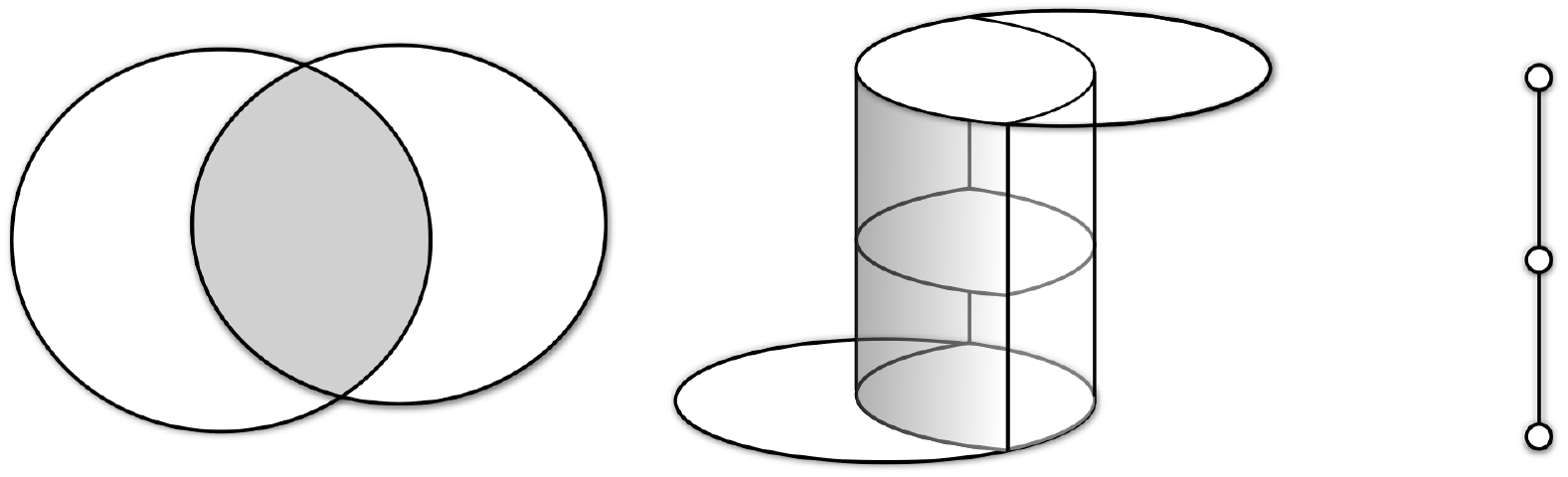} 
  \caption{(L) The barycentric decomposition of the blow-up complex of a two-element cover alongside the decomposition of their nerve.}
  \end{figure}

From $N^\alpha$ and $W^\alpha$ we define the finite CW-complex $B^\alpha$ that glues together all the realizations of the simplices of $N^\alpha$ paired with their corresponding cover elements' intersection in $W^\alpha$.
     \[
       B^\alpha := \bigcup_{\substack{N^\alpha\ni \sigma = v_0\to\cdots\to v_k \\ k\geq 0}} U_{v_0}^\alpha\times |\sigma|.
    \]
    
Note that this is the barycentric decomposition of the so-called (Mayer--Vietoris) blow-up complex central to the proof of the Nerve Theorem, called the realization of a diagram of spaces in Hatcher~\cite{Hatcher01}, and other more recent persistent homology research, e.g. Zomorodian and Carlsson's work on localized homology~\cite{zomorodian08localized}.
Other readers familiar with combinatorial topology and discrete geometry may recognize this as the homotopy colimit of the categorical diagram between $N^\alpha$ and $\text{Top}$ constructed from the cover elements' interesections' correspondence with the vertices and the inclusions with the edges of $N^\alpha$.
One may refer to Kozlov~\cite{koslov08}, p.262, for the relevant definition or Welker et al.'s treatise on homotopy colimits and their applications ~\cite{welker99homotopy}.
This is expanded upon in Appendix~\ref{sec:hom_colimit}.

The associated filtration is denoted $\B:=(B^\alpha)_{\alpha\ge 0}$.
By definition we have that $B^\alpha \subseteq W^\alpha \times | N^\alpha |$ for all $\alpha$.
 As the filtration $\B$  organizes and combines the nerves and the covered simplicial complexes, it is easier to define maps from it rather than from $\W$.

\begin{figure}
\centering
    \includegraphics[width=.9\textwidth]{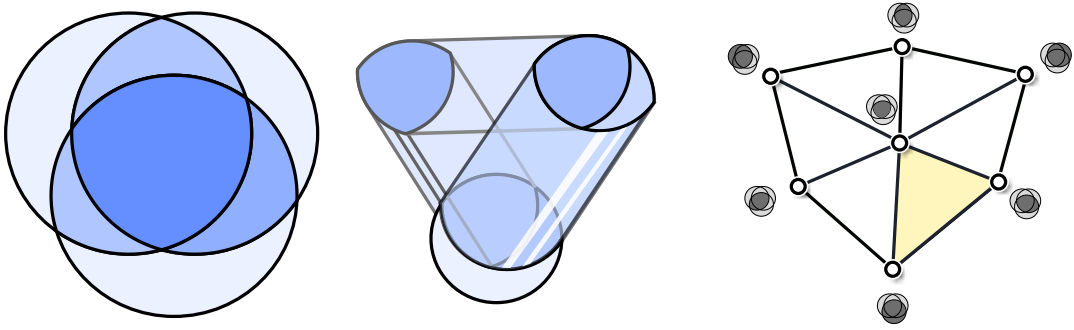} 
  \caption{(L) A portion of the blowup complex for a three-element cover.
  Highlighted is a $2$-simplex in the realization of the barycentric decomposition of the nerve and its associated cell in the blowup.}
  \end{figure}
  
  Using the filtration $\B$ we will now reduce the proof to constructing a $(K+1)\e$-interleaving between $H_K(\B)$ and $H_K(\nerve\U)$ as follows.
    There are natural projection maps $b^\alpha:=\pi_{W^\alpha} i$ for each $\alpha\geq 0$, where $i:B^\alpha \hookrightarrow W^\alpha \times | N^\alpha |$ and $\pi_{W^\alpha}: W^\alpha \times | N^\alpha |\rightarrow W^\alpha$ when 
    It is well-known fact that that $b^\alpha$ is a homotopy equivalence (see 4G.2 in~\cite{Hatcher01}) for covers of paracompact spaces, so $H^{\CW}_*(B^\alpha)\cong H^{\CW}_*(W^\alpha)\cong H_*(W^\alpha)$ as finite simplicial complexes are paracompact.
    
        Moreover, two projections $b^\alpha$ and $b^\beta$ commute with the inclusions $i_\mathcal{B}^{\alpha,\beta}: B^\alpha\hookrightarrow B^\beta$ and $i_\mathcal{W}^{\alpha,\beta}: W^\alpha\hookrightarrow W^\beta$, yielding the following commutative homological diagram for all scales $\alpha,\beta$ such that $0\leq\alpha\leq \beta$.
  \begin{center}
          \begin{equation}\label{eq:b_map}
          \xymatrix{
		 H_{*}(W^\alpha)~\ar[r]^{i_\mathcal{W}^{\alpha,\beta}}  & H_*(W^\beta) \\
		H^{\CW}_*(B^\alpha)~\ar[r]^{i_\mathcal{B}^{\alpha,\beta}} \ar[u]_\cong^{b^\alpha}& H^{\CW}_*(B^\beta)~\ar[u]_\cong^{b^\beta}\\
          }
      \end{equation}
      \end{center}
    
 From Diagram~\ref{eq:b_map} it follows that $\text{Dgm}(\B) = \text{Dgm}(\W)$ by Theorem~\ref{thm:persistent_lemma}.

    Define the filtration $\mathcal{N}:=(|N^\alpha|)_{\alpha\ge 0}$.
    There are also natural projection maps $p^\alpha:=\pi_{|N^\alpha|} i$ for each $\alpha\geq 0$, where $i:B^\alpha \hookrightarrow W^\alpha \times | N^\alpha |$ and $\pi_{N^\alpha}: W^\alpha \times | N^\alpha |\rightarrow |N^\alpha|$.
  When $\U^\alpha$ is a good cover of $W^\alpha$, and by extension $\U$ is a good cover filtration of $\W$, the projection maps $p^\alpha$ are homotopy equivalences that commute with the filtration inclusions.
  This is a central component to the proof the Nerve Theorem and Lemma~\ref{lem:persistent_nerve_lem}.
      
    However, under our assumptions, we do not have the homotopy equivalences resulting from the good cover condition, so instead we must find an $(K+1)\e$ interleaving between $H_K(\mathcal{N})$   and $H^{\CW}_K(\mathcal{B})$ to prove our theorem.
    As $|N^\alpha|$ is homeomorphic to $|\nerve\U^\alpha|$, this is sufficient.
   In the next section we will construct the chain maps mapping a basis chain in $C_K^{\CW}(B^\alpha)$ to a chain in $C_K^{\CW}(W^{\alpha+(K+1)\e})$ which will result in an interleaving by symmetrizing.
    
\subsection{The Chain Maps}

     For the remainder of the proof we will not use the geometric realization vertical bars when discussing basis cellular chains corresponding to tensor products of geometric simplices viewed as cells to avoid cumbersome presentation.
     For an abstract $k$-simplex $\sigma\in N^\alpha$ where $\sigma= v_0\to\ldots\to v_k$, the following two shorthands will be used, $\sigma_i:=|\sigma _{[v_0,\ldots,v_i]}|$ and $\overline{\sigma_i}:=|\sigma _{[v_i, \ldots, v_k]}|$.
     These are the geometric realizations of the restriction of $\sigma$ to the first $i$-vertices and the $i$-th through $k$-th vertices respectively.
 For notational simplicity, given some $\sigma$ and some vertex $v\in \sigma$, $\sigma\setminus v:=|\sigma _{[v_0,\ldots, \hat{v},\ldots, v_k]}|$, where $\hat{v}$ denotes the removal of vertex $v$.

 For each non-empty $v\subseteq [n]$, pick a vertex $x_v\in U_v^{\alpha'}\subset W^{\alpha'}$ where $\alpha':=\min\{\alpha\ge 0\mid U_v^\alpha\neq\emptyset\}$, and note that $x_v\in U_v^\beta\subset W^\beta$ for all $\beta\geq \alpha'$.
 Consider the map $x_v:U_v^\alpha \to U_v^{\alpha+\e}$ which is the  extension of the linear map sending each vertex of $U_v^\alpha$ to to $x_v\in U_v^{\alpha+\e}$.
 This results in the chain map $x_v: C^{\CW}_*(U_v^\alpha) \to C^{\CW}_*(U_v^{\alpha+\e})$ defined as follows.
            \[
 x_v(\sigma) :=\begin{cases}
  x_v & \text{ if } \dim\sigma = 0,\\
  0 & \text{ otherwise.}
  \end{cases}
  \]
 
The following lemma will be used to show the existence of a chain homotopy between the inclusion chain maps and the vertex chain maps.

 \begin{lemma}\label{lem:exists_hom}
 Fix $\alpha\geq 0$.
 Given non-empty $v\subseteq [n]$, where $v\in\nerve\U^\alpha$, and $x_v\in U_v^\alpha$ is $v$'s corresponding vertex, consider the inclusion chain map $i_v^{\alpha,\alpha+\e}: C^{\CW}_*(U_v^\alpha) \rightarrow C^{\CW}_*(U_v^{\alpha+\e})$ and $x_v: C^{\CW}_*(U_v^\alpha)\rightarrow C^{\CW}_*(U_v^{\alpha+\e})$.
 There exists a chain homotopy from $i_v^{\alpha,\alpha+\e}$ to $x_v$.
 \end{lemma}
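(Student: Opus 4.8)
The plan is to derive the chain homotopy from the hypothesis that $\tilde{H}_*(U_v^\alpha \hookrightarrow U_v^{\alpha+\e}) = 0$, i.e. that $\U$ is an $\e$-good cover. Let me write $W = C^{\CW}_*(U_v^\alpha)$ and $Z = C^{\CW}_*(U_v^{\alpha+\e})$, with $\iota = i_v^{\alpha,\alpha+\e}: W \to Z$ the inclusion chain map and $\xi = x_v: W \to Z$ the constant-vertex chain map. Both are augmentation-preserving chain maps on the reduced chain complexes (adjoining a copy of $\Z/2$ in degree $-1$), since $U_v^\alpha$ and $U_v^{\alpha+\e}$ are nonempty. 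The goal is to build maps $c_k: C_k^{\CW}(U_v^\alpha) \to C_{k+1}^{\CW}(U_v^{\alpha+\e})$ with $\partial c + c \partial = \iota - \xi$. I would construct $c$ by induction on dimension $k$, exactly as in the standard acyclic-carrier / method-of-acyclic-models argument.

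First I would check the base case. In degree $-1$ (reduced complexes), both maps are the identity on $\Z/2$, so $\iota - \xi = 0$ there; set $c_{-1} = 0$. In degree $0$, for a vertex $w \in U_v^\alpha$, $(\iota - \xi)(w) = w - x_v$, a reduced $0$-cycle in $Z$; since $\tilde{H}_0(U_v^\alpha \hookrightarrow U_v^{\alpha+\e}) = 0$ and $w - x_v$ lies in the image of the inclusion, its class dies in $\tilde{H}_0(U_v^{\alpha+\e})$, hence $w - x_v = \partial c_0(w)$ for some $1$-chain $c_0(w)$; define $c_0$ on basis elements this way and extend linearly. For the inductive step, assuming $c_{j}$ is defined for $j < k$ (with $k \geq 1$) satisfying $\partial c_j + c_j \partial = \iota - \xi$ in those degrees, for each basis $k$-cell $\sigma$ of $U_v^\alpha$ consider $z_\sigma := (\iota - \xi)(\sigma) - c_{k-1}(\partial \sigma) \in C_k^{\CW}(U_v^{\alpha+\e})$. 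A routine computation using $\partial^2 = 0$ and the inductive hypothesis shows $\partial z_\sigma = 0$, so $z_\sigma$ is a $k$-cycle; moreover $z_\sigma$ lies in the image of the inclusion-induced map on chains (each term $\iota(\sigma)$, $\xi(\sigma)$, $c_{k-1}(\partial\sigma)$ is supported on $U_v^{\alpha+\e}$ and, tracing the construction, is a chain that comes from $U_v^\alpha$ — this needs the carrier of $c_{k-1}$ to live in $U_v^\alpha$'s image, which I maintain as an additional inductive invariant). Since $H_k(U_v^\alpha \hookrightarrow U_v^{\alpha+\e}) = 0$ for $k \geq 1$ (and $\tilde{H}_k = H_k$ in positive degrees), $z_\sigma = \partial w_\sigma$ for some $(k+1)$-chain $w_\sigma$; set $c_k(\sigma) := w_\sigma$ and extend linearly. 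Then $\partial c_k(\sigma) = z_\sigma = (\iota - \xi)(\sigma) - c_{k-1}(\partial\sigma)$, which rearranges to the chain-homotopy identity in degree $k$.

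**The main obstacle** is making precise the claim that the cycles $z_\sigma$ actually lie in the image of the chain-level map induced by $U_v^\alpha \hookrightarrow U_v^{\alpha+\e}$ — this is what lets us invoke the $\e$-goodness hypothesis, which only says that \emph{images} of inclusions vanish in homology, not that $U_v^{\alpha+\e}$ itself is acyclic. The clean way to handle this is to carry, throughout the induction, the invariant that $c_j$ is an \emph{acyclic carrier} in the sense that for every basis cell $\sigma$ of $U_v^\alpha$, the chain $c_j(\sigma)$ is supported on the subcomplex $U_v^{\alpha+\e}$ but — more to the point — that the whole construction takes place inside $C_*^{\CW}(U_v^{\alpha+\e})$ while only ever needing to bound cycles that are themselves images of cycles (or of chains bounding such) from $U_v^\alpha$. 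Since by hypothesis the inclusion $U_v^\alpha \hookrightarrow U_v^{\alpha+\e}$ kills reduced homology in all degrees $\le K$, every such cycle bounds, and the induction goes through. One should note the induction only needs to run up to degree $K$ (the fixed dimension cap), so finiteness of the complexes guarantees all the relevant chain groups are finite-dimensional and the choices of bounding chains exist; I would also remark that only $v \in \nerve\U^\alpha$ (equivalently $U_v^\alpha \ne \emptyset$) is relevant, which is exactly the hypothesis stated, so the reduced-homology argument is legitimate (the empty complex is excluded). With the chain homotopy $c = (c_k)_{k \le K}$ in hand, the lemma is proved; implicitly this $c$ is computable since all maps are on finite cellular chain complexes and the bounding chains can be found by linear algebra over $\Z/2$.
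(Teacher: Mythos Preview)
Your inductive construction is exactly the paper's: set $c_k(\sigma)$ to be a bounding chain for the cycle $z_\sigma = (\iota - \xi)(\sigma) - c_{k-1}(\partial\sigma)$, degree by degree. You rightly isolate the delicate point that the paper's own write-up glosses over: the $\e$-good hypothesis only guarantees that cycles \emph{coming from} $U_v^\alpha$ bound in $U_v^{\alpha+\e}$, whereas $z_\sigma$ contains the term $c_{k-1}(\partial\sigma)$, which was produced in $U_v^{\alpha+\e}$ and need not lie in $U_v^\alpha$. (The paper's phrase ``$i_v^{\alpha,\alpha+\e}(z)=z$'' tacitly treats $z$ as a chain in $U_v^\alpha$, which it is not.)

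Your proposed repair, however, does not close the gap. You suggest carrying the invariant that the carrier of $c_{k-1}$ lies in $\iota(U_v^\alpha)$, but the bounding chain for $z_\sigma$ is chosen in $C_{k+1}(U_v^{\alpha+\e})$ and there is no reason it can be taken inside $U_v^\alpha$, so the invariant cannot be propagated and can already fail in degree~$0$. Concretely, let $U_v^\alpha$ be two disjoint edges $[p,q]$ and $[r,s]$ with $x_v=p$, and let $U_v^{\alpha+\e}$ be the $4$-cycle on $p,q,r,s$; the $\e$-good condition holds, yet no $1$-chain supported on $U_v^\alpha$ bounds $r+p$, and for one legitimate choice of $c_0$ the cycle $z_{[r,s]}$ equals the fundamental $1$-cycle of the square, which does not bound. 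The correct argument uses that coefficients lie in the field $\mathbb{Z}/2\mathbb{Z}$: every chain complex of vector spaces splits as its homology (with zero differential) direct sum a contractible complex, and a chain map inducing zero on homology is null-homotopic on each summand separately---on the contractible part via any contracting homotopy, on the homology part because each basis cycle maps to a boundary by hypothesis. Since $(\iota-\xi)_*=0$ on reduced homology, this produces the desired chain homotopy.
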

 \begin{proof}
 We construct the chain homotopy by induction on dimension to prove that for all $n$ that there exists $c_n,c_{n-1}$ such that $c_{n-1}\partial_n +  \partial_{n+1}c_n= i_v^{\alpha,\alpha+\e}-x_v$.
 In the base case we can consider $c_{-1}=c_{-2}=0$.
 Now for some $k\geq 0$, assume there exists $c_{k-1},c_{k-2}$ such that $c_{k-2}\partial_{k-1} +  \partial_{k}c_{k-1}= i_v^{\alpha,\alpha+\e}-x_v$.
 
Let $z$ be the $k$-chain such that $z=-(c_{k-1}\partial_k)(\sigma)+i_v^{\alpha,\alpha+\e}(\sigma)-x_v(\sigma)$.
Observe that for $k\geq 0$, for any $k$-simplex $\sigma\in C^{\CW}_k(U_v^\alpha)$,
 \begin{align*}
 \partial_k(z)&=-(\partial_kc_{k-1}\partial_k)(\sigma)+(\partial_k i_v^{\alpha,\alpha+\e})(\sigma)-(\partial_k x_v)(\sigma)\\
 &=(c_{k-2}\partial_{k-1}\partial_{k})(\sigma)-(i_v^{\alpha,\alpha+\e} \partial_k)(\sigma)+(x_v \partial_k)(\sigma)+(\partial_k i_v^{\alpha,\alpha+\e})(\sigma)-(\partial_k x_v)(\sigma)\\
 &=0
 \end{align*}
 
 The second line follows by the inductive hypothesis as $\partial_k(\sigma)$ is a $(k-1)$-chain. The third line follows as $\partial_{k-1}\partial_k=0$ and $i_v^{\alpha,\alpha+\e}$ and $x_v$ are chain maps so they commute with the boundary operators.

This proves that $z$ is a cycle so $i_v^{\alpha,\alpha+\e}(z)=z$ is a cycle, and $\tilde{H_*}(i_v^{\alpha,\alpha+\e}(z))=0$ as $\U$ is an $\e$-good cover.
There then must exist a boundary $b$ such that $\partial_{k+1}(b)=z$.
 Define $c_k(\sigma):=b$.
 By the above calculations this choice of $c_k$ satisfies the inductive hypothesis so we are done.
 \end{proof}

By Lemma~\ref{lem:exists_hom}, for a given $\alpha\geq 0$ and $v\in \nerve \U^\alpha$, there exists a chain homotopy, which we denote as $c^\alpha_v$, between the identity chain map $i_v^{\alpha,\alpha+\e}: C^{\CW}_*(U_v^\alpha)  \rightarrow C^{\CW}_*(U_v^{\alpha+\e})$  and the constant chain map $x^\alpha_v: C^{\CW}_*(U_v^\alpha)\rightarrow C^{\CW}_*(U_v^{\alpha+\e})$.
 By definition we have the equality $\partial c^\alpha_v + c^\alpha_v\partial = i_v^{\alpha,\alpha+\e} + x_v^\alpha$.

Denote $t:=(K+1)\e$ for short for the remainder of the paper when referencing the 
 For $k\leq K$, define the map $c^\alpha: C^{\CW}_k(B^\alpha)\rightarrow C^{\CW}_{k+1}(W^{\alpha+t})$ for a cellular basis element $\tau\otimes\sigma \in C_k^{\CW}(B^\alpha)$, where $\tau$ is a $p$-cell and $\sigma$ is a $q$-cell corresponding to the geometric realization of the abstract $q$-simplex $\sigma=v_0\to\ldots\to v_k$ in $N^\alpha$ , as
 \[
 c^\alpha(\tau\otimes\sigma):= c^\alpha_\sigma(\tau) := (c^{\alpha+q\e}_{v_q}\ldots c^\alpha_{v_0})(\tau).
 \] 

 Note that this is well-defined despite $c_v^\alpha$'s domain each being $C^{\CW}_*(U_v^\alpha)$ as for any basis cellular chain $\tau\otimes\sigma$, $\tau$ is a simplex of $U_v^\alpha$ for some $v$ by the definition of $B^\alpha$.
 
 Recall the two projections from the barycentric decomposition of the blow-up complex $B^\alpha$, $b^\alpha$ and $p^\alpha$.
 For a cellular chain $k$-chain $\tau\otimes\sigma\in C_k^{\CW}(B^\alpha)$, the projection-induced chain maps $b^\alpha: C_*^{\CW}(B^\alpha)\rightarrow C_*^{\CW}(W^\alpha)$ and $p^\alpha: C_*^{\CW}(B^\alpha)\rightarrow C_*^{\CW}(|N^\alpha|)$ are defined as follows,
 
  \[
  b^\alpha(\tau\otimes\sigma) := \begin{cases}
  \tau & \text { if } \dim\sigma = 0 \\
  0 & \text{ otherwise,}
  \end{cases}
  \]
  
  and
  
 \[
 p^\alpha(\tau\otimes\sigma) :=\begin{cases}
  \sigma & \text{ if } \dim\tau = 0,\\
  0 & \text{ otherwise.}
  \end{cases}
  \] 
  
Define the chain map $q^\alpha: C^{\CW}_*(|N^\alpha|)\to C^{\CW}_*(W^{\alpha+t})$ for a basis $k$-simplex $\sigma\in C^{\CW}_k(|N^\alpha|)$ to be the following

 \[
 q^\alpha(\sigma):=\begin{cases}
  c^\alpha(x_{v_0}\otimes (\sigma\setminus v_0)) & \text{ if }\dim\sigma \geq1\\
  x_{v_0} & \text{ if }\dim\sigma = 0.
  \end{cases}
  \] 
 
The chain maps $b^\alpha$ and $q^\alpha$ are induced by topological maps so they are chain maps by construction, while $q^\alpha$ it is not immediately apparent it is a chain map.
  \begin{lemma}\label{q_chainmap}
The map $q^\alpha: C^{\CW}_*(|N^\alpha|)\rightarrow C^{\CW}_*( W^{\alpha+t})$ as defined above, where $t=(K+1)\e$  is a chain map for all dimensions less than or equal to $K$.
  \end{lemma}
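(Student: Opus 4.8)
The plan is to verify the chain-map identity $\partial q^\alpha = q^\alpha\partial$ directly on basis chains, by induction on dimension. Since everything is over $\mathbb{Z}/2\mathbb{Z}$ signs never intervene, so it suffices to show $\partial q^\alpha(\sigma)=q^\alpha(\partial\sigma)$ for each simplex $\sigma=v_0\to\cdots\to v_k$ of $|N^\alpha|$ with $1\le k\le K$; linearity then gives the statement on all chains. The starting point is to unwind the definitions of $q^\alpha$ and of $c^\alpha$ into the single recursion
\[
  q^\alpha(\sigma)=c^{\alpha+(k-1)\e}_{v_k}\bigl(q^\alpha(\sigma\setminus v_k)\bigr)\qquad(k\ge 1),
\]
where $v_k$ is the last vertex of $\sigma$ and $q^\alpha$ of a $0$-simplex $v$ is $x_v$; indeed both sides equal the composite $c^{\alpha+(k-1)\e}_{v_k}\circ\cdots\circ c^{\alpha+\e}_{v_2}\circ c^\alpha_{v_1}(x_{v_0})$. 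Throughout I pass silently to $C^{\CW}_*(W^{\alpha+t})$, inside which every inclusion chain map — each $i^{\beta,\beta'}_v$ and each $U_v\hookrightarrow U_{v'}$ for $v'\subset v$ — acts as the identity on chains; this is legitimate because $t=(K+1)\e$ is large enough that all the intermediate complexes $U_{v_j}^{\alpha+j\e}$ arising in the composites are subcomplexes of $W^{\alpha+t}$.

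The case $k=0$ is trivial, and for $k=1$, $\sigma=v_0\to v_1$, the defining chain-homotopy identity $\partial c^\alpha_{v_1}+c^\alpha_{v_1}\partial=i^{\alpha,\alpha+\e}_{v_1}+x^\alpha_{v_1}$ applied to the $0$-chain $x_{v_0}$ (whose boundary is $0$) gives $\partial q^\alpha(\sigma)=x_{v_0}+x_{v_1}=q^\alpha(v_0)+q^\alpha(v_1)=q^\alpha(\partial\sigma)$. For $k\ge 2$, assuming $q^\alpha$ is a chain map on simplices of dimension $<k$, I apply $\partial$ to the recursion and expand via the chain-homotopy identity for $c^{\alpha+(k-1)\e}_{v_k}$:
\[
  \partial q^\alpha(\sigma)=i^{\alpha+(k-1)\e,\,\alpha+k\e}_{v_k}\bigl(q^\alpha(\sigma\setminus v_k)\bigr)+x^{\alpha+(k-1)\e}_{v_k}\bigl(q^\alpha(\sigma\setminus v_k)\bigr)+c^{\alpha+(k-1)\e}_{v_k}\bigl(\partial q^\alpha(\sigma\setminus v_k)\bigr).
\]
The first term is $q^\alpha(\sigma\setminus v_k)$; the second vanishes because $q^\alpha(\sigma\setminus v_k)$ has dimension $k-1\ge 1$ and $x^{\alpha+(k-1)\e}_{v_k}$ annihilates all positive-dimensional chains; in the third, the inductive hypothesis rewrites $\partial q^\alpha(\sigma\setminus v_k)$ as $\sum_{i<k}q^\alpha\bigl((\sigma\setminus v_i)\setminus v_k\bigr)$. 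Applying the recursion once more to each $(k-1)$-simplex $\sigma\setminus v_i$, whose last vertex is again $v_k$, identifies the corresponding summand $c^{\alpha+(k-2)\e}_{v_k}\bigl(q^\alpha((\sigma\setminus v_i)\setminus v_k)\bigr)$ with $q^\alpha(\sigma\setminus v_i)$. Collecting terms yields $\partial q^\alpha(\sigma)=q^\alpha(\sigma\setminus v_k)+\sum_{i<k}q^\alpha(\sigma\setminus v_i)=q^\alpha(\partial\sigma)$, closing the induction.

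The one delicate point — which I expect is the real content of the lemma — is the mismatch of filtration parameters in the last step: expanding $\partial q^\alpha(\sigma)$ produces the chain homotopy $c^{\alpha+(k-1)\e}_{v_k}$, whereas the recursion for $q^\alpha(\sigma\setminus v_i)$ naturally carries $c^{\alpha+(k-2)\e}_{v_k}$, one filtration step earlier. To equate $c^{\alpha+(k-1)\e}_{v_k}(\chi)$ with $c^{\alpha+(k-2)\e}_{v_k}(\chi)$ inside $C^{\CW}_*(W^{\alpha+t})$ for the relevant chains $\chi$, one needs the chain homotopies chosen coherently along the filtration, i.e.\ $c^{\beta+\e}_v\circ i^{\beta,\beta+\e}_v=i^{\beta+\e,\beta+2\e}_v\circ c^\beta_v$ for all $\beta$. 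I would therefore first record that the construction in Lemma~\ref{lem:exists_hom} can be taken to produce such a coherent family: that construction fixes $c^\alpha_v$ one cell at a time by choosing a chain bounding a cycle $z$ which does not itself depend on $\alpha$, so a single bounding chain may be selected once and reused at every scale at which the cell is present. With this coherent choice fixed, the induction above goes through exactly as written.
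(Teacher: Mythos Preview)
Your argument is essentially the paper's: both proceed by induction on $\dim\sigma$, peel off the outermost chain homotopy $c_{v_k}$ via the identity $\partial c_v+c_v\partial=i+x_v$, kill the $x_v$ term by positive-dimensionality, and feed the inductive hypothesis into the remaining $c_{v_k}\partial(\cdots)$ term. Your final paragraph in fact catches a point the paper's proof leaves implicit: in the paper's display the passage
\[
  \sum_{j=0}^{k} c_{v_{k+1}}^{\alpha+k\e}\, q_{k-1}^\alpha(\sigma''\setminus v_j)\;=\;\sum_{j=0}^{k} q_k^\alpha(\sigma'\setminus v_j)
\]
is written as an equality, but unwinding the definition of $q_k^\alpha(\sigma'\setminus v_j)$ produces $c_{v_{k+1}}^{\alpha+(k-1)\e}$ on the outside, not $c_{v_{k+1}}^{\alpha+k\e}$; your coherence requirement $c^{\beta+\e}_v\circ i^{\beta,\beta+\e}_v=i^{\beta+\e,\beta+2\e}_v\circ c^\beta_v$ is exactly what is needed to justify that step, and your observation that the inductive construction in Lemma~\ref{lem:exists_hom} can be made once per cell and reused at all later scales is the right way to arrange it.
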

  
  \begin{proof}
  Denote $q^\alpha_k: C^{\CW}_k(|N^\alpha|)\rightarrow C^{\CW}_k( W^{\alpha+t})$ for this proof to make it clear what dimension is being worked in.
  We will prove that $q^\alpha$ is a chain map by induction on the basis  of $C^{\CW}_k(|N^\alpha|)$ for arbitrary $\alpha \geq 0$ and $k\leq K$.
  These are the simplices of $|N^\alpha|$ resulting from abstract simplices in $N^\alpha$ of the form $\sigma = v_0\to\ldots\to v_k$.
  
  In the base case, where $\dim\sigma=0$, then $\sigma= |v|$ for some vertex $v\in N^\alpha$, so we have that $\partial q^\alpha _0(v) = \partial(x_v) = 0 = q^\alpha_{-1}\partial(v)$.
  
  Now assume that for some $k\geq 1$, the following holds for any given basis $k$-chain $\sigma\in C^{\CW}_k(|N^\alpha|)$, $q_{k-1}^\alpha \partial_k(\sigma) = \partial_k q_k^\alpha(\sigma)$.
  Now consider a basis $(k+1)$-chain $\sigma' \in C^{\CW}_{k+1}(|N^\alpha|)$.
  We have the following equalities, defining $\sigma'' := \sigma'\setminus v_{k+1}$.
  
  \begin{align*}
  \partial q_{k+1}^\alpha(\sigma')&=\partial c^\alpha(x_{v_0}\otimes (\sigma'\setminus v_0))\\
  &= \partial c_{v_{k+1}}^{\alpha+k\e} c^\alpha(x_{v_0}\otimes (\sigma'\setminus (v_0\cup v_{k+1}))) \\
  &= (c_{v_{k+1}}^{\alpha+k\e}\partial + i_{v_{k+1}}^{\alpha+k\e,\alpha+ (k+1)\e} + x^\alpha_{v_{k+1}})(c^\alpha(x_{v_0}\otimes  (\sigma'\setminus (v_0\cup v_{k+1})))) \\
  &= c_{v_{k+1}}^{\alpha+k\e}\partial c^\alpha (x_{v_0}\otimes (\sigma''\setminus v_0)) + c^\alpha (x_{v_0}\otimes (\sigma''\setminus v_0)) + 0\\
  &= c_{v_{k+1}}^{\alpha+k\e}\partial q_k^\alpha(\sigma'') + q_k^\alpha(\sigma'')\\
  &= c_{v_{k+1}}^{\alpha+k\e} q_{k-1}^\alpha \partial(\sigma'') +q_k^\alpha(\sigma'') \\
  &= \sum^k_{j=0} c_{v_{k+1}}^{\alpha+k\e} q_{k-1}^\alpha(\sigma''\setminus v_j) + q_k^\alpha(\sigma'')  \\
  &= \sum^k_{j=0} q_k^\alpha (\sigma'\setminus v_j) +q_k^\alpha(\sigma'\setminus v_{k+1}) = \sum_{i=0}^{k+1} q_k^\alpha(\sigma'\setminus v_i) = q_k^\alpha \partial(\sigma').
  \end{align*}
  
We thus have by definition $q^\alpha$ is a chain map.
 \end{proof}

 Now we will define the last chain map needed $a^\alpha: C^{\CW}_*(B^\alpha)\to C^{\CW}_*(W^{\alpha+t})$, the composition of $q^\alpha$ and $p^\alpha$.
 As $p^\alpha$ and $q^\alpha$ is a chain map, $a^\alpha$ is a chain map as it is the composition of chain maps.
 For any basis cellular chain $\tau\otimes\sigma\in C_k^{\CW}(B^\alpha)$,

  \[
  a^\alpha(\tau\otimes\sigma)=(q^\alpha p^\alpha)(\tau\otimes\sigma)=\begin{cases}
  c^\alpha(x_{v_0}\otimes(\sigma\setminus v_0)) & \text{ if } \dim\sigma\geq 1 \text{ and } \dim\tau =0\\
  x_{v_0} & \text{ if } \dim\sigma = 0 \text{ and } \dim\tau = 0 \\
  0 & \text{ otherwise,}
  \end{cases}
  \]

Figure~\ref{eq:complete_dgm} is a diagram of the $k$-dimensional cellular chain groups in question and the maps between them we have just defined.
  
         \begin{equation}\label{eq:complete_dgm}
          \xymatrix{
		 C^{\CW}_{k}(W^\alpha)~\ar[r]^{i_\mathcal{W}^{\alpha,\alpha+t}} & C^{\CW}_k(W^{\alpha+t}) \\
		 C^{\CW}_{k}(B^\alpha)~\ar[r]^{i_\mathcal{B}^{\alpha,\alpha+t}}\ar[u]^{b^\alpha} \ar[d]_{p^\alpha} & C^{\CW}_k(B^{\alpha+t})~\ar[d]^{p^{\alpha+t}} \ar[u]_{b^{\alpha+t}}\\
		 C^{\CW}_{k}(|N^\alpha|)~\ar[ruu]_(.42){q^\alpha}\ar[r]^{i_\mathcal{N}^{\alpha,\alpha+t}} & C^{\CW}_k(|N^{\alpha+t}|) \\
          }
      \end{equation}
      
In the following lemma, we prove that $a^\alpha$ and $i_\W^{\alpha,\alpha+t}b^\alpha$ are chain homotopic with chain homotopy $c^\alpha$ between them, which allows us to extend the local chain homotopies as guaranteed by Lemma~\ref{lem:exists_hom} to be defined on the entire blow-up complex.
This is a critical step in the proof as it implies one can extract global information from a seemingly local assumption, the $\e$-goodness of the cover.
      
  \begin{lemma}\label{lem:c_homotopy}
  $c^\alpha$ is a chain homotopy between the chain maps $a^\alpha$ and $i_{\W}^{\alpha,\alpha+t}b^\alpha: C^{\CW}_k(B^\alpha)\rightarrow C^{\CW}_k(W^{\alpha+t})$ for all $k \leq K$, i.e. $\partial c^\alpha = c^\alpha\partial +i_{\W}^{\alpha,\alpha+t}b^\alpha + a^\alpha$.
  \end{lemma}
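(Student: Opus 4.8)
The plan is to verify the chain-homotopy identity $\partial c^\alpha = c^\alpha\partial + i_\W^{\alpha,\alpha+t}b^\alpha + a^\alpha$ on a cellular basis element $\tau\otimes\sigma$ of $C_k^{\CW}(B^\alpha)$, where $\sigma = v_0\to\cdots\to v_q$ is the $q$-cell coming from an abstract simplex of $N^\alpha$ and $\tau$ is a $p$-cell with $p+q=k$. The natural induction is on $q=\dim\sigma$, exploiting the recursive definition $c^\alpha_\sigma = c^{\alpha+q\e}_{v_q}\circ c^\alpha_{\sigma\setminus v_q}$, together with the product boundary formula $\partial(\tau\otimes\sigma) = \partial\tau\otimes\sigma + \tau\otimes\partial\sigma$ on the blow-up complex. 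The three terms on the right-hand side behave very differently depending on $q$: when $q=0$ we have $b^\alpha(\tau\otimes\sigma)=\tau$, $a^\alpha(\tau\otimes\sigma)=x_{v_0}$ (if $\dim\tau=0$, else $0$), and $c^\alpha(\tau\otimes\sigma) = c^\alpha_{v_0}(\tau)$, so the identity reduces to the single-vertex chain-homotopy equation $\partial c^\alpha_{v_0} + c^\alpha_{v_0}\partial = i_{v_0}^{\alpha,\alpha+\e} + x_{v_0}^\alpha$ furnished by Lemma~\ref{lem:exists_hom}. When $q\ge 1$, both $b^\alpha(\tau\otimes\sigma)$ and $a^\alpha(\tau\otimes\sigma)$ can be nonzero only in restricted cases ($a^\alpha$ needs $\dim\tau=0$), so the bulk of the work is showing that the ``interior'' contributions cancel.

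First I would treat the base case $q=0$ directly as above. Then, for the inductive step $q\ge 1$, I would write $\sigma'' := \sigma\setminus v_q$ (a $(q-1)$-cell, so $\partial\sigma = \sigma'' + \sum_{j<q}(\sigma\setminus v_j)$ over $\mathbb Z/2$, and for $q=1$ one must separately track the two endpoint vertices) and expand $\partial c^\alpha(\tau\otimes\sigma) = \partial\,c^{\alpha+q\e}_{v_q}\big(c^\alpha(\tau\otimes\sigma'')\big)$. Applying the Lemma~\ref{lem:exists_hom} identity for $v_q$ at scale $\alpha+q\e$ gives
\[
\partial c^{\alpha+q\e}_{v_q}(w) = c^{\alpha+q\e}_{v_q}\partial(w) + i_{v_q}^{\alpha+q\e,\alpha+(q+1)\e}(w) + x_{v_q}^{\alpha+q\e}(w)
\]
with $w = c^\alpha(\tau\otimes\sigma'')$. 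The term $c^{\alpha+q\e}_{v_q}\partial(w)$ is then split using $\partial w = \partial c^\alpha(\tau\otimes\sigma'')$, to which the inductive hypothesis applies; the $x_{v_q}^{\alpha+q\e}$ term vanishes because $w$ is supported in positive dimension whenever $\tau$ has positive dimension, and when $\dim\tau=0$ it collapses $w$ to (a multiple of) $x_{v_q}$, which is exactly what is needed to produce the $a^\alpha$ term. I would then reassemble: the inclusion term $i_{v_q}^{\alpha+q\e,\alpha+(q+1)\e}(w)$ combines with the faces $\sigma\setminus v_j$ for $j<q$ coming from $c^\alpha\partial(\tau\otimes\sigma)$ to rebuild $c^\alpha\partial(\tau\otimes\sigma)$ in full, and the face $\tau\otimes(\sigma\setminus v_q) = \tau\otimes\sigma''$ part together with the dimension bookkeeping yields $i_\W^{\alpha,\alpha+t}b^\alpha(\tau\otimes\sigma) + a^\alpha(\tau\otimes\sigma)$. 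This computation is essentially a careful rearrangement of the same algebra already carried out in the proof of Lemma~\ref{q_chainmap}, since $q^\alpha$ was defined through $c^\alpha$.

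The main obstacle, and the place that demands care rather than cleverness, is the bookkeeping of which summands survive as a function of $\dim\tau$ and $\dim\sigma$: the maps $b^\alpha$, $p^\alpha$, $a^\alpha$, and $c^\alpha$ are each defined by cases that kill a chain unless $\tau$ or $\sigma$ is a vertex, so one must verify that in every regime (namely $\dim\sigma=0$ vs.\ $\dim\sigma\ge 1$, crossed with $\dim\tau=0$ vs.\ $\dim\tau\ge 1$) the surviving terms on the two sides of $\partial c^\alpha = c^\alpha\partial + i_\W^{\alpha,\alpha+t}b^\alpha + a^\alpha$ match exactly. A secondary point to check is that all scale indices stay consistent: $c^\alpha_\sigma$ for a $q$-cell $\sigma$ lands in $C^{\CW}_*(W^{\alpha+(q+1)\e})$, and since $q\le k\le K$ we have $\alpha+(q+1)\e \le \alpha+(K+1)\e = \alpha+t$, so composing with the inclusion into $C^{\CW}_*(W^{\alpha+t})$ makes every term in the identity live in the same group; I would note this explicitly so the displayed equation type-checks. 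No step here uses $\e$-goodness beyond what is already packaged in Lemma~\ref{lem:exists_hom}, so the proof is purely a verification that the local chain homotopies $c_v^\alpha$ glue along the combinatorics of $N^\alpha$ into a global chain homotopy on $B^\alpha$.
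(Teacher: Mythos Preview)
Your proposal is correct and follows the same route as the paper: induction on $q=\dim\sigma$, peeling off the outermost $c^{\alpha+q\e}_{v_q}$ via the identity from Lemma~\ref{lem:exists_hom}, applying the inductive hypothesis to $\tau\otimes(\sigma\setminus v_q)$, and case-splitting on $\dim\tau$ to track which of $b^\alpha$ and $a^\alpha$ survive.

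One small correction to your bookkeeping sketch: in the inductive step $q\ge 1$ the chain $w=c^\alpha(\tau\otimes\sigma'')$ has dimension $p+q\ge 1$ \emph{regardless} of $\dim\tau$, so $x_{v_q}^{\alpha+q\e}(w)=0$ in every case; it is not the source of the $a^\alpha$ term. The $a^\alpha(\tau\otimes\sigma)$ contribution instead appears when you apply the inductive hypothesis to $\partial c^\alpha(\tau\otimes\sigma'')$ and then hit the resulting $a^\alpha(\tau\otimes\sigma'') = c^\alpha(x_{v_0}\otimes(\sigma''\setminus v_0))$ with $c^{\alpha+q\e}_{v_q}$, which by the recursive definition of $c^\alpha$ gives $c^\alpha(x_{v_0}\otimes(\sigma\setminus v_0)) = a^\alpha(\tau\otimes\sigma)$. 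This is exactly the reassembly you describe, just with the $a^\alpha$ term coming from a different summand than you indicated.
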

  \begin{proof}
 
 First we check its true for the initial cases and then prove it for all dimensions by induction.
 
 Consider a basis cellular chain $\tau\otimes\sigma\in C^{\CW}_0(B^\alpha)$ where $\dim\tau=0$ and $\sigma= v_0$ some vertex $v_0\in N^\alpha$.
 We have the following equalities.

 \begin{align*}
 \partial c^\alpha(\tau\otimes v_0)=\partial c^\alpha_{v_0}(\tau) &= c^\alpha_{v_0}(\partial\tau) + \tau + x_{v_0}^\alpha(v_0) \\
&= 0 + \tau+ x_{v_0}\\
&= c^\alpha\partial (x\otimes v)+ b^\alpha(\tau\otimes v_0) + a^\alpha(\tau \otimes v_0). 
\end{align*}

The next case is if $\dim\tau>0$ and $\sigma = |v_0|$ for some vertex $v_0\in N^\alpha$.

\begin{align*}
 \partial c^\alpha(\tau\otimes v_0)=\partial c^\alpha_{v_0}(\tau)&=c^\alpha_{v_0}(\partial\tau)+\tau+x_{v_0}^\alpha(\tau) \\
 &=c^\alpha(\partial\tau\otimes v) + \tau + 0 \\
 &=c^\alpha\partial(\tau\otimes v) + b^\alpha(\tau\otimes v_0) + a^\alpha(\tau\otimes v_0)
 \end{align*}

Now assume that $\dim\tau =0$ and $\dim\sigma = 1$, so that $\sigma = v_0\rightarrow v_1$.

\begin{align*}
\partial c^\alpha(\tau\otimes (v_0\rightarrow v_1))=\partial(c^{\alpha+\e}_{v_1}(c^\alpha_{v_0}(\tau))) &= (c^{\alpha+\e}_{v_1}\partial + i_{v_1}^{\alpha+\e,\alpha+2\e} + x_{v_1}^{\alpha+\e})(c^\alpha_{v_0}(\tau))\\
&= c_{v_1}^{\alpha+\e}\partial c_{v_0}^\alpha(\tau) + c_{v_0}^\alpha(\tau) + 0 \\
&= c^\alpha_{v_1}c^\alpha_{v_0}(\partial \tau) + \tau + x_{v_0}) + c^\alpha_{v_0}(\tau)\\
&= c^\alpha(\partial \tau\otimes \sigma) + c^\alpha_{v_1}(\tau) + c^\alpha_{v_1}(x_{v_0}) + c^\alpha_{v_0}(\tau)\\
&= c^\alpha(\partial \tau\otimes \sigma) + c^\alpha(\tau\otimes v_1) + c^\alpha(\tau \otimes v_0) + c^\alpha(x_{v_0}\otimes v_1) \\
&= c^\alpha(\partial \tau\otimes \sigma) + c^\alpha(x\otimes \partial\sigma) + a^\alpha(\tau\otimes \sigma)\\
&= c^\alpha\partial (\tau \otimes \sigma) + a^\alpha(\tau\otimes \sigma) + b^\alpha(\tau\otimes \sigma)
\end{align*}

Now assume that for any basis chain $\tau\otimes\sigma$ such that $\dim\tau=0$ and $\dim\sigma\leq k-1$ , $\partial c^\alpha(\tau\otimes\sigma)=\partial c^\alpha + i_{\W}^{\alpha,\alpha+t}b^\alpha + a^\alpha$. 
For a $k$-simplex $\sigma\in |N^\alpha|$ we have the following equalities.

\begin{align*}
\partial c^\alpha(\tau\otimes \sigma) &= \partial c^\alpha_{v_k} (c^\alpha(\tau\otimes \sigma_{k-1}))\\
&= c^\alpha_{v_k}\partial (c^\alpha(\tau\otimes\sigma_{k-1})) + c^\alpha(\tau\otimes\sigma_{k-1}) + x_{v_k}^\alpha(\tau) \\
&= c^\alpha_{v_k}(c^\alpha\partial (\tau\otimes\sigma_{k-1}) + a^\alpha(\tau\otimes\sigma_{k-1}) + b^\alpha(\tau\otimes\sigma_{k-1}))+ c^\alpha(\tau\otimes\sigma_{k-1}) \\
&= c^\alpha_{v_k}(c^\alpha(\partial \tau\otimes\sigma_{k-1})+c^\alpha(\tau\otimes\partial(\sigma_{k-1})) +c^\alpha_{v_k}(a^\alpha(\tau\otimes\sigma_{k-1})+ c^\alpha_{v_k}(b^\alpha(\tau\otimes\sigma_{k-1}))))\\
& \text{  \indent\indent    } + c^\alpha(x\otimes\sigma_{k-1})  \\
&= c^\alpha(\partial \tau\otimes \sigma ) + c^\alpha(\tau\otimes\partial\sigma)  + 2c^\alpha(\tau \otimes\sigma_{k-1}) + c^\alpha_{v_k}(c^\alpha(x_{v_0}\otimes(\sigma_{k-1}\setminus v_0)) + 0 \\
&=  c^\alpha\partial(\tau\otimes\sigma) + 0 + c^\alpha(x_{v_0}\otimes(\sigma\setminus v_0)) \\
&= c^\alpha\partial(\tau\otimes\sigma) + b^\alpha(\tau \otimes\sigma) + a^\alpha(\tau \otimes\sigma).
\end{align*}

To complete the proof by induction, consider $\tau$ and $\sigma$ such that $\dim\tau=s\geq 1$ and $\dim\sigma = k \geq 2$, and assume the hypothesis holds true for all $\sigma'\in $ such that $\dim\sigma'=k-1$.
In particular this means it is true for $\sigma_{k-1}$ so the following holds. 

\begin{align*}
\partial c^\alpha(\tau\otimes \sigma) &= \partial c^\alpha_{v_k} (c^\alpha(\tau\otimes \sigma_{k-1}))\\
&= c^\alpha_{v_k}\partial (c^\alpha(\tau\otimes\sigma_{k-1})) + c^\alpha(\tau\otimes\sigma_{k-1}) + x_{v_k}^\alpha(c^\alpha(\tau\otimes\sigma_{k-1})) \\
&= c^\alpha_{v_k}(c^\alpha\partial (\tau\otimes\sigma_{k-1}) + a^\alpha(\tau\otimes\sigma_{k-1}) + b^\alpha(\tau\otimes\sigma_{k-1}))+ c^\alpha(\tau\otimes\sigma_{k-1})+ 0 \\
&= c^\alpha_{v_k}(c^\alpha(\partial\tau\otimes\sigma_{k-1})+c^\alpha(\tau\otimes\partial(\sigma_{k-1})) +c^\alpha_{v_k}(a^\alpha(\tau\otimes\sigma_{k-1})+ c^\alpha_{v_k}(b^\alpha(\tau\otimes\sigma_{k-1}))))\\
& \text{  \indent\indent    } + c^\alpha(\tau\otimes\sigma_{k-1})  \\
&= c^\alpha(\partial\tau\otimes \sigma ) + c^\alpha(\tau\otimes\partial\sigma)  + 2c^\alpha(\tau\otimes\sigma_{k-1}) + c^\alpha_{v_k}(a^\alpha(\tau\otimes\sigma_{k-1})) + c^\alpha_{v_k}(b^\alpha(\tau\otimes\sigma_{k-1})) \\
&=  c^\alpha\partial(\tau\otimes\sigma) + 0 + 0 + 0 \\
&= c^\alpha\partial(\tau\otimes\sigma) + a^\alpha(\tau\otimes\sigma) + b^\alpha(\tau\otimes\sigma).
\end{align*}
 \end{proof}

\subsection{The Interleaving Tools}

 We will now introduce a general construction on chain maps which we will apply to the maps defined in the previous subsection to construct the interleaving. 
Consider simplicial complexes $X$, $Y$, and $Z$.
The Alexander-Whitney diagonal approximation chain map (see ~\cite{lecturenotes01}), $\Delta_*: C^{\CW}_*(X)\rightarrow C^{\CW}_*(X)\otimes C^{\CW}_*(X)$,  is defined on a $k$-simplex $\sigma$ by $\Delta_k(\sigma) := \sum_{i=0}^k \sigma_i\otimes \overline{\sigma}_i$.
We remind the reader that if $\sigma$ is an $n$-simplex, $\sigma_i := \sigma |_{[v_0,\ldots, v_i]}$ and $\overline{\sigma}_i :=\sigma |_{[v_i,\ldots, v_n]}$.
Given a chain map $f: C^{\CW}_*(X)\otimes C^{\CW}_*(Y) \rightarrow C^{\CW}_*(Z)$, the chain map $\widehat{f}: C^{\CW}_*(X)\otimes C^{\CW}_*(Y)\rightarrow C^{\CW}_*(Z)\otimes C^{\CW}_*(Y)$ is defined by $\widehat{f}:= (f\otimes \text{id}_Y)\circ (\text{id}_X\otimes \Delta_*)$.
We call $\widehat{f}$ the $\textbf{lift}$ of $f$ and it the composition of chain maps so it is a chain map.

In alignment with the definition of the lift we view $C^{\CW}_*(B^\alpha)$ as a subgroup of $C^{\CW}_*(W^\alpha)\otimes C^{\CW}_*(|N^\alpha|)$ as the chain complex  $C^{\CW}_*(W^\alpha)\otimes C^{\CW}_*(|N^\alpha|)$ is naturally isomorphic to $C^{\CW}_*(W^\alpha\times |N^\alpha|)$.
We must still check that the maps we are lifting are well-defined, i.e. their images are properly contained in $C^{\CW}_*(B^{\alpha+t})$.

\begin{figure}
  \begin{center}
    \includegraphics[width=0.48\textwidth]{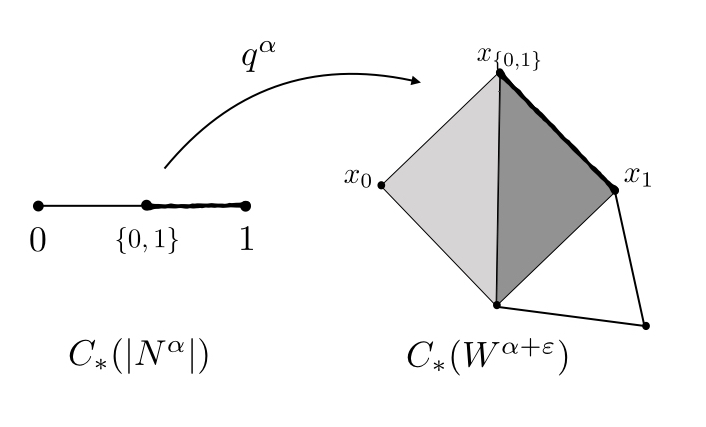} \includegraphics[width=0.48\textwidth]{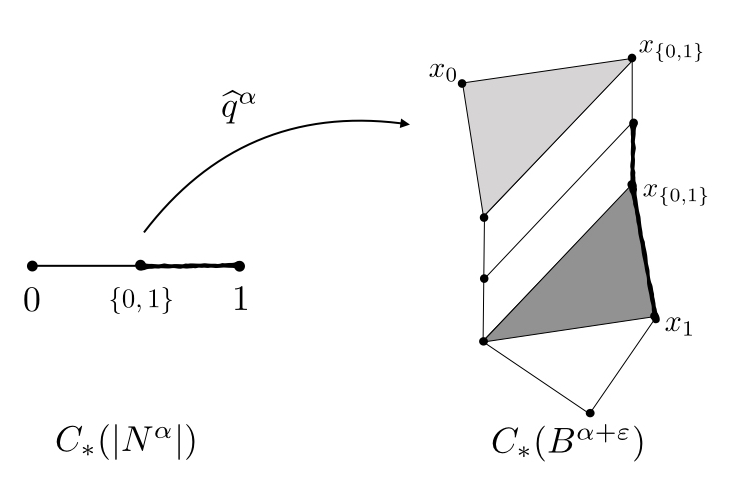}
    \caption{(R) $q^\alpha$ mapping the realization of a $1$-simplex of $|N^\alpha|$.
    (L) The $1$-simplex being mapped by $\widehat{q}^\alpha$ to a $1$-chain of $C_*(B^{\alpha+\e})$.
    Viewing $B^{\alpha+\e}$ vertically, $\widehat{q}^\alpha$ projects onto $1$-simplex $q^\alpha$.}
  \end{center}
 \end{figure} 

The lift of $q^\alpha$ is $\widehat{q}^\alpha(\sigma):= \sum_{i=0}^k q^\alpha(\sigma_i)\otimes\overline{\sigma}_i$.
To see that $\widehat{q}^\alpha$ is well-defined, for a basis $0$-chain $\sigma=|v_0| \in C_0(|N^\alpha|)$, note that $\widehat{q}^\alpha(\sigma)= x_{v_0}\otimes v_0\in C_0(U^{\alpha+\e}_{v_0}\times |v_0|)$, so $\widehat{q}^\alpha(\sigma)\in C_0(B^{\alpha+t})$.
For $k\leq K$, for a basis $k$-chain $\sigma = |v_0\rightarrow \ldots \rightarrow v_k|$ we have $\widehat{q}^\alpha(\sigma) = \sum_{i=0}^k  c^\alpha(x_{v_0}\otimes (\sigma_i\setminus v_0)) \otimes\overline{\sigma}_i$.
For each $i$, we have that $c^\alpha(x_{v_0}\otimes (\sigma_i\setminus v_0))\subseteq U_{v_i}^{\alpha+i\e}$ so  $c^\alpha(x_{v_0}\otimes (\sigma_i\setminus v_0))\times |\overline{\sigma}_i|\subset U_{v_i}^{\alpha+i\e}\times |\overline{\sigma}_i|\subset B^{\alpha+t}$.
We then have that for all $i$ each chain of the form $c^\alpha(x_{v_0}\otimes (\sigma_i\setminus v_0))\otimes \overline{\sigma}_i$ is in $C^{\CW}_k(B^{\alpha+t})$ so their sum is as well so $\widehat{q}^\alpha$ is well-defined.
The maps $q^\alpha$ and $\widehat{q^\alpha}$ define a way to map the nerve filtration into the simplicial union filtration and the subdivided blow-up complex filtration respectively, albeit at a further scale.

The rest of the lifted maps are well-defined as one can easily check that $\widehat{a}^\alpha = \widehat{q}^\alpha p^\alpha$, $p^{\alpha+t} \widehat{q}^\alpha= i_\mathcal{N}^{\alpha,\alpha+t}$, and $\widehat{i_\W^{\alpha,\alpha+t}{b}^\alpha} =  i^{\alpha,\alpha+t}_\mathcal{B}$.
For example, the latter equality holds for a basis cellular chain $\tau\otimes\sigma\in C^{\CW}_*(B^\alpha)$,
\[
\widehat{i_\W^{\alpha,\alpha+t}{b}^\alpha}(\tau\otimes\sigma)=\sum_{i=0}^k b(\tau\otimes\sigma_i)\otimes\overline{\sigma}_i=\tau\otimes\sigma = i^{\alpha,\alpha+t}_\mathcal{B}(\tau\otimes\sigma).
\]

 These lifted maps yield Diagram~\ref{eq:complete_lift_dgm} constructed from Diagram~\ref{eq:complete_dgm}.
         \begin{equation}\label{eq:complete_lift_dgm}
          \xymatrix{
		 C^{\CW}_{k}(W^\alpha)~\ar[r]^{i_\mathcal{W}^{\alpha,\alpha+t}} & C^{\CW}_k(W^{\alpha+t}) \\
		 C^{\CW}_{k}(B^\alpha)~\ar[r]^{\widehat{i_\W^{\alpha,\alpha+t}{b}^\alpha} }\ar[u]^{b^\alpha} \ar[d]_{p^\alpha} & C^{\CW}_k(B^{\alpha+t})~\ar[d]^{p^{\alpha+t}} \ar[u]_{b^{\alpha+t}}\\
		 C^{\CW}_{k}(|N^\alpha|)~\ar[ru]^{\widehat{q}^\alpha} \ar[r]^{i_\mathcal{N}^{\alpha,\alpha+t}} & C^{\CW}_k(|N^{\alpha+t}|) \\
          }
      \end{equation}

The following lemma, stated in terms of general chain complexes, proves that chain homotopies are preserved by liftings.
This will then be applied to the chain homotopy relationship proven in  Lemma~\ref{lem:c_homotopy} to achieve the chain complex interleaving.
\begin{lemma}\label{lem:lift_chain_hom}
If chain maps $f,g: C^{\CW}_*(X)\otimes C^{\CW}_*(Y) \rightarrow C^{\CW}_*(Z)$ are chain homotopic, then $\widehat{f}$ and $\widehat{g}$ are chain homotopic, where $\widehat{f}$ and $\widehat{g}$ are the lifts of $f$ and $g$ respectively.
\end{lemma}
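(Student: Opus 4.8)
The plan is to take an explicit chain homotopy $c$ witnessing $f \simeq g$ and use it to build a chain homotopy $\widehat{c}$ witnessing $\widehat f \simeq \widehat g$, by the same lifting recipe used to define the lift of a chain map. Recall $\widehat f = (f \otimes \id_Y) \circ (\id_X \otimes \Delta_*)$, where $\Delta_*$ is the Alexander--Whitney diagonal; the natural guess is
\[
\widehat c := (c \otimes \id_Y) \circ (\id_X \otimes \Delta_*),
\]
so that on a basis element $\tau \otimes \sigma$ with $\sigma$ a $q$-simplex we get $\widehat c(\tau \otimes \sigma) = \sum_{i=0}^{q} c(\tau \otimes \sigma_i) \otimes \overline{\sigma}_i$. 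First I would verify degrees are correct: $c$ raises degree by one, $\id_X \otimes \Delta_*$ preserves total degree, and $\otimes \id_Y$ preserves degree, so $\widehat c$ raises degree by one as required of a chain homotopy.

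The main computation is to check $\partial \widehat c + \widehat c \, \partial = \widehat f + \widehat g$ (working mod $2$). The clean way is to observe that $\id_X \otimes \Delta_*$ is itself a chain map (since $\Delta_*$ is), and that on the tensor product $C_*(X) \otimes C_*(X) \otimes C_*(Y)$ the map $c \otimes \id$ satisfies the ``Leibniz with a sign-free boundary'' relation: $\partial (c \otimes \id) + (c \otimes \id)\partial = (f \otimes \id) + (g \otimes \id)$, which follows directly from $\partial c + c \partial = f + g$ together with $\partial(u \otimes w) = \partial u \otimes w + u \otimes \partial w$ and the fact that we are over $\mathbb{Z}/2\mathbb{Z}$ so the cross-terms $c(u)\otimes \partial w$ cancel against themselves. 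Precomposing this identity with the chain map $\id_X \otimes \Delta_*$ (which commutes with $\partial$) gives
\[
\partial \widehat c + \widehat c \, \partial = (f \otimes \id_Y)(\id_X \otimes \Delta_*) + (g \otimes \id_Y)(\id_X \otimes \Delta_*) = \widehat f + \widehat g,
\]
as desired. So the argument is essentially: ``lift'' is a functor-like operation that turns the chain-homotopy equation into itself because all the auxiliary maps ($\Delta_*$, $\id_Y$) are chain maps and tensoring with a chain map is exact in the relevant sense.

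I would present this either as the slick two-line argument above, or, if the paper prefers explicitness matching its earlier style (as in Lemma~\ref{q_chainmap} and Lemma~\ref{lem:c_homotopy}), as a direct basis-level computation on $\tau \otimes \sigma$ expanding $\partial(\sum_i c(\tau \otimes \sigma_i)\otimes \overline{\sigma}_i)$ via the product boundary formula and the simplicial identities $\partial \sigma_i = \sum_{j \le i}(\sigma \setminus v_j)_{i-[j<i]}$-type relations, then collecting telescoping terms. The expected obstacle is purely bookkeeping: in the explicit version one must carefully track how $\partial$ distributes across $\sigma_i$ versus $\overline{\sigma}_i$ and confirm that the ``interface'' terms (those involving $\sigma_i$ and $\overline{\sigma}_i$ meeting at the vertex $v_i$) cancel in pairs mod $2$, which is exactly the algebraic content of the Alexander--Whitney map being a chain map. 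For robustness I would actually prove the abstract statement first — that $(-\otimes \id)\circ(\id \otimes \Delta_*)$ sends chain homotopies to chain homotopies — and then simply invoke it, since this also cleanly re-derives that $\widehat f$ is a chain map and keeps the proof short.
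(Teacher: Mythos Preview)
Your proposal is correct and follows essentially the same route as the paper: define $\widehat{c} := (c \otimes \id_Y)\circ(\id_X \otimes \Delta_*)$, observe that $c \otimes \id_Y$ is a chain homotopy between $f \otimes \id_Y$ and $g \otimes \id_Y$, and then use that precomposing a chain homotopy with a chain map (here $\id_X \otimes \Delta_*$) yields a chain homotopy between the precomposed maps. The paper states exactly this two-step argument without the explicit basis-level verification you sketch as an alternative.
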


\begin{proof}
Since $f$ and $g$ are chain homotopic, then by definition there exists  a chain homotopy $c: C^{\CW}_*(X)\otimes C^{\CW}_*(Y)\rightarrow C^{\CW}_{*+1}(Z)$.
We have that $c\otimes \id_Y$ is a chain homotopy between $f\otimes \id_Y$ and $g\otimes \id_Y$.
One can also prove thatt $\widehat{f}= (f\otimes\id_Y)\circ (\id_X\otimes \Delta_*)$ and $\widehat{g}= (g\otimes\id_Y)\circ (\id_X\otimes \Delta_*)$ are chain homotopic via $\widehat{c}:= (c\otimes \id_Y)\circ (\id_X \otimes \Delta_*)$ as if two maps are chain homotopic via $c$, then their compositions with a chain map $h$ are chain homotopic via $c\circ h$.
\end{proof}
      
  We now conclude with our main result, the Generalized Persistent Nerve Theorem.

        \begin{theorem}\label{thm:gpnl2}(Generalized Persistent Nerve Theorem)
        
         Given a finite collection of finite simplicial filtrations $\U = \{U_0,\ldots,U_n\}$, where  $U_i:=(U_i^\alpha)_{\alpha\geq 0}$ and all $U_i^\alpha$ are subcomplexes of a sufficiently large simplicial complex,  if $\U$ is an $\e$-good cover filtration of $\W := (\bigcup_{i=0}^n U_i^\alpha)_{\alpha\geq 0}$, then for all $K\geq 0$,
      \[
        \dist_B(\text{Dgm}_K(\W), \text{Dgm}_K(\nerve \U)) \le (K+1)\e.
      \]
    \end{theorem}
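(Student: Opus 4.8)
The plan is to reverse-engineer a genuine $(K+1)\e$-interleaving of persistence modules out of the chain-level apparatus built in the previous subsections and then feed it into the Algebraic Stability Theorem. Fix $K$ and write $t=(K+1)\e$, as throughout the construction. Two reductions are already in hand: Diagram~\ref{eq:b_map} together with the Persistence Equivalence Theorem (Theorem~\ref{thm:persistent_lemma}) gives $\text{Dgm}(\B)=\text{Dgm}(\W)$, and the homeomorphism $|N^\alpha|\cong|\nerve\U^\alpha|$, which is compatible with the filtration inclusions, gives $\text{Dgm}(\N)=\text{Dgm}(\nerve\U)$. Since every complex involved is finite, all persistence modules are pointwise finite-dimensional, so it suffices to exhibit a $t$-interleaving between $H^{\CW}_K(\B)$ and $H^{\CW}_K(\N)$ and invoke Theorem~\ref{thm:interleaving_bound}.

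The engine is Lemma~\ref{lem:c_homotopy} after lifting. That lemma says $c^\alpha$ is a chain homotopy from $a^\alpha=q^\alpha p^\alpha$ to $i_\W^{\alpha,\alpha+t}b^\alpha$. Applying Lemma~\ref{lem:lift_chain_hom} and the identities recorded after Diagram~\ref{eq:complete_dgm}, namely $\widehat{a}^\alpha=\widehat{q}^\alpha p^\alpha$ and $\widehat{i_\W^{\alpha,\alpha+t}b^\alpha}=i_\B^{\alpha,\alpha+t}$, I obtain
\[
\widehat{q}^\alpha p^\alpha\;\simeq\;i_\B^{\alpha,\alpha+t}\colon C^{\CW}_*(B^\alpha)\to C^{\CW}_*(B^{\alpha+t}),
\]
hence on homology $(\widehat{q}^\alpha)_*(p^\alpha)_*=(i_\B^{\alpha,\alpha+t})_*$ in every degree $\le K$.

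Now define $\phi^\alpha:=(i_\N^{\alpha,\alpha+t})_*(p^\alpha)_*\colon H^{\CW}_K(B^\alpha)\to H^{\CW}_K(|N^{\alpha+t}|)$ and $\psi^\alpha:=(\widehat{q}^\alpha)_*\colon H^{\CW}_K(|N^\alpha|)\to H^{\CW}_K(B^{\alpha+t})$. Unwinding the definitions, $\psi^{\alpha+t}\phi^\alpha=(\widehat{q}^{\alpha+t})_*(i_\N^{\alpha,\alpha+t})_*(p^\alpha)_*$; using naturality of the topological projection $p$ (so $i_\N^{\alpha,\alpha+t}p^\alpha=p^{\alpha+t}i_\B^{\alpha,\alpha+t}$) and the lifted identity above at scale $\alpha+t$,
\[
\psi^{\alpha+t}\phi^\alpha=(\widehat{q}^{\alpha+t})_*(p^{\alpha+t})_*(i_\B^{\alpha,\alpha+t})_*=(i_\B^{\alpha+t,\alpha+2t})_*(i_\B^{\alpha,\alpha+t})_*=(i_\B^{\alpha,\alpha+2t})_*,
\]
and using $p^{\alpha+t}\widehat{q}^\alpha=i_\N^{\alpha,\alpha+t}$ (verified just before Diagram~\ref{eq:complete_lift_dgm}),
\[
\phi^{\alpha+t}\psi^\alpha=(i_\N^{\alpha+t,\alpha+2t})_*(p^{\alpha+t}\widehat{q}^\alpha)_*=(i_\N^{\alpha+t,\alpha+2t})_*(i_\N^{\alpha,\alpha+t})_*=(i_\N^{\alpha,\alpha+2t})_*.
\]
These are exactly the composite conditions of a $t$-interleaving, so Theorem~\ref{thm:interleaving_bound} gives $\dist_B(\text{Dgm}_K(\B),\text{Dgm}_K(\N))\le t=(K+1)\e$, and the two reductions turn this into the stated bound for $\W$ and $\nerve\U$.

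The part I expect to require the most care is the remaining clause in the definition of an interleaving: that $\phi$ and $\psi$ commute with the internal inclusion maps $i_\B^{\alpha,\beta}$ and $i_\N^{\alpha,\beta}$. For $\phi$ this is automatic, since $p$ is induced by a topological projection and $i_\N$ is a genuine inclusion, so both commute with scale inclusions on the nose. For $\psi=(\widehat{q}^\alpha)_*$ it is more delicate, because $\widehat{q}^\alpha$ is assembled from the local chain homotopies $c^\alpha_v$, and Lemma~\ref{lem:exists_hom} only produces these scale by scale, so a priori $\widehat{q}^\beta i_\N^{\alpha,\beta}$ and $i_\B^{\alpha+t,\beta+t}\widehat{q}^\alpha$ need not agree even up to chain homotopy. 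I would handle this by observing that a finite simplicial filtration of subcomplexes of a fixed finite complex changes at only finitely many scales, so the inductive existence argument in Lemma~\ref{lem:exists_hom} can be carried out compatibly across those critical scales; this makes each $c^\alpha_v$, and therefore $q^\alpha$ and $\widehat{q}^\alpha$, natural in $\alpha$ up to chain homotopy, which is all that survives after passing to homology. Everything else is bookkeeping with the identities already established above.
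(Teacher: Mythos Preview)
Your proposal is correct and follows essentially the same route as the paper: reduce to an interleaving between $H_K^{\CW}(\B)$ and $H_K^{\CW}(\N)$, lift Lemma~\ref{lem:c_homotopy} via Lemma~\ref{lem:lift_chain_hom} to obtain $\widehat{q}^\alpha p^\alpha\simeq i_\B^{\alpha,\alpha+t}$, verify the two triangle identities using $p^{\alpha+t}\widehat{q}^\alpha=i_\N^{\alpha,\alpha+t}$, and invoke Algebraic Stability. Your map $\phi^\alpha=(i_\N^{\alpha,\alpha+t})_*(p^\alpha)_*$ and the paper's choice $(p^{\alpha+t}i_\B^{\alpha,\alpha+t})_*$ agree by naturality of the projection, so the two arguments are identical up to notation; you are in fact more scrupulous than the paper in explicitly flagging the requirement that $(\widehat{q}^\alpha)_*$ commute with the filtration inclusions, a point the paper's own proof passes over without comment.
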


\begin{proof}
Recall that there are natural isomorphisms $H^{\CW}_*(B^\alpha)\cong H^{\CW}_*(W^\alpha)$ for all scales $\alpha\geq 0$ as $b^\alpha$ is a homotopy equivalence for finite simplicial covers.
By Theorem~\ref{thm:persistent_lemma}, $\text{Dgm}(\B)=\text{Dgm}(\W)$.

Fixing $K\geq 0$, we now focus our attention on constructing a $(K+1)\e$-interleaving between $H_K^{\CW}(\B)$ and $H_K^{\CW}(\W)$.
Define $t:=(K+1)\e$.
Consider the two collections of chain maps $p := (p^{\alpha+t} i_{\B}^{\alpha,\alpha+t})_{\alpha\geq 0}$ and $\widehat{q} := (\widehat{q}^\alpha)_{\alpha\geq 0}$ defined in dimension $K$.

By applying Lemma~\ref{lem:lift_chain_hom} to Lemma~\ref{lem:c_homotopy} and recalling that $\widehat{a}^\alpha = \widehat{q}^\alpha p^\alpha$ and $\widehat{i_\B^{\alpha,\alpha+t}{b}^\alpha} =  i^{\alpha,\alpha+t}_\mathcal{B}$,  we know that for all $\alpha\geq 0$, $\widehat{a}^\alpha \simeq i^{\alpha,\alpha+t}_\mathcal{B}$.
Then, for arbitrary $\alpha\geq 0$, we have that $\widehat{q}^{\alpha+t} p^{\alpha+t} i_{\B}^{\alpha,\alpha+t} \simeq i_{\B}^{\alpha+t,\alpha+2t}i_{\B}^{\alpha,\alpha+t}= i_{\B}^{\alpha,\alpha+2t}$.
We also have that $p^{\alpha+2t}i_{\B}^{\alpha+t,\alpha+2t}\widehat{q}^\alpha = i_{\N}^{\alpha,\alpha+2t}$, as $p^{\alpha+2t}i_{\B}^{\alpha+t,\alpha+2t} = i_{\mathcal{N}}^{\alpha+t,\alpha+2t} p^{\alpha+t}$ and $p^{\alpha+t}\widehat{q}^{\alpha}=i_{\N}^{\alpha,\alpha+t}$.

Applying $K$-dimensional cellular homology to $p$ and $\widehat{q}$ we have $H_K^{\CW}(\widehat{q}^{\alpha+t}) H_k^{\CW}( p^{\alpha+t} i_{\B}^{\alpha,\alpha+t})= H_K^{\CW}(i_{\B}^{\alpha,\alpha+2t})$ and $H_K^{\CW}(p^{\alpha+2t}i_{\B}^{\alpha+t,\alpha+2t})H_K^{\CW}(\widehat{q}^\alpha)=H_K^{\CW}( i_{\N}^{\alpha,\alpha+2t})$ for all $\alpha\geq 0$, so the collection of maps $H_K^{\CW}(p)$ and $H_K^{\CW}(\widehat{q})$ form a $(K+1)\e$-interleaving between $H_K^{\CW}(\B)$ and $H_K^{\CW}(\N)$.

To complete the proof, $N^\alpha$ is the barycentric decomposition of $\nerve\U^\alpha$, so their geometric realizations are homeomorphic and thus by Theorem~\ref{thm:persistent_lemma}, $\text{Dgm}(\mathcal{N})=\text{Dgm}(\nerve\U)$.
Also we have that $H_K^{\CW}(\N)\cong H_K(\N)$ as cellular and simplicial homology agree for simplicial complexes.
By considering the isomorphisms maps corresponding to the previous equality and $\text{Dgm}(\B)=\text{Dgm}(\W)$, we can conclude there is $(K+1)\e$-interleaving between $H_K(\W)$ and $H_K(\nerve\U)$, so $\dist_B(\text{Dgm}_K(\W),\text{Dgm}_K(\nerve\U))\leq (K+1)\e$ by Theorem~\ref{thm:interleaving_bound}.
\end{proof}

From this theorem we can further conclude that $\dist_B(\text{Dgm}(\W), \text{Dgm}(\nerve \U)) \le (D+1)\e$, where $D$ is the maximum dimension of $\nerve\U$.
We also have the Persistent Nerve Lemma as a corollary when restricted to finite simplicial filtrations.

\begin{corollary}\label{cor:cor_persistent_nerve_lemma} (The Simplicial Persistent Nerve Lemma)

        Given a finite collection of finite simplicial filtrations $\U = \{U_0,\ldots,U_n\}$ where  $U_i:=(U_i^\alpha)_{\alpha\geq 0}$ and all $U_i^\alpha$ are subcomplexes of a sufficiently large simplicial complex, if $\U$ is a good cover filtration of the $\W = (\bigcup_{i=0}^n U_i^\alpha)_{\alpha\geq 0}$, then 
      \[
        \textrm{Dgm}(\W)=\textrm{Dgm}(\nerve \U).
      \]
      \end{corollary}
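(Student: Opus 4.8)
The plan is to obtain this corollary as the $\e = 0$ instance of the Generalized Persistent Nerve Theorem (Theorem~\ref{thm:gpnl2}). First I would recall the observation made immediately before Theorem~\ref{thm:gpnl}: if $\U$ is a good finite simplicial cover filtration, then for every non-empty $v\subseteq[n]$ and every $\alpha\ge 0$ the intersection $U_v^\alpha$ is empty or contractible, hence $\im\, \tilde{H}_*(U_v^\alpha\hookrightarrow U_v^\alpha)\subseteq \tilde{H}_*(U_v^\alpha)=0$. That is exactly the statement that $\U$ is a $0$-good cover filtration of $\W$, so the hypotheses of Theorem~\ref{thm:gpnl2} are met with $\e=0$.

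Applying Theorem~\ref{thm:gpnl2} with $\e=0$ then gives, for every dimension $K\ge 0$,
\[
\dist_B(\textrm{Dgm}_K(\W),\textrm{Dgm}_K(\nerve\U))\le (K+1)\cdot 0 = 0.
\]
To convert this into an equality of diagrams I would invoke the fact recalled in Section~\ref{sec:background} that two finite persistence diagrams coincide if and only if the bottleneck distance between them is $0$. It therefore suffices to note that the diagrams here are finite: since each $U_i^\alpha$ is a subcomplex of one fixed finite ambient complex, the filtrations $\W$ and $\nerve\U$ assume only finitely many distinct values as $\alpha$ ranges over $[0,\infty)$, so their persistence modules are pointwise finite-dimensional with finitely many off-diagonal points in each dimension. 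Hence $\textrm{Dgm}_K(\W)=\textrm{Dgm}_K(\nerve\U)$ for all $K$, and therefore $\textrm{Dgm}(\W)=\textrm{Dgm}(\nerve\U)$.

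I do not expect any real obstacle: the whole content is that a good cover is the degenerate $\e=0$ case of an $\e$-good cover, so the corollary is just a limiting instance of the main theorem, and the only point needing care is checking the finiteness that licenses the bottleneck-distance-zero criterion, which is guaranteed by the standing assumption of a single finite ambient complex. As an alternative that avoids passing through the bottleneck distance, one could observe that when $\e=0$ one has $t=(K+1)\e=0$, so the chain homotopies of Lemma~\ref{lem:c_homotopy} together with the identities $\widehat{q}^\alpha p^\alpha=\widehat{a}^\alpha$, $p^\alpha\widehat{q}^\alpha = i_\N^{\alpha,\alpha}$, and $\widehat{i_\W^{\alpha,\alpha}{b}^\alpha}=i_\B^{\alpha,\alpha}$ show that $p^\alpha$ and $\widehat{q}^\alpha$ induce mutually inverse isomorphisms on cellular homology that commute with the filtration inclusions; combined with the natural isomorphisms $H^{\CW}_*(\B)\cong H_*(\W)$ and $H^{\CW}_*(\N)\cong H_*(\nerve\U)$, this yields an isomorphism of persistence modules $H_*(\W)\cong H_*(\nerve\U)$, and the Persistence Equivalence Theorem (Theorem~\ref{thm:persistent_lemma}) gives the conclusion directly.
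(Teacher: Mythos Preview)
Your proposal is correct and follows essentially the same approach as the paper's own proof: observe that a good cover is a $0$-good cover, apply the Generalized Persistent Nerve Theorem with $\e=0$ to get bottleneck distance zero, and conclude equality of diagrams by finiteness. Your additional justification of finiteness and the alternative route via the Persistence Equivalence Theorem are extra detail beyond what the paper records, but the core argument is the same.
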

      \begin{proof}
      Since $\U$ is a good cover, it is also a $0$-good cover.
     By Theorem~\ref{thm:gpnl}, $\dist_B(\text{Dgm}(\W),\text{Dgm}(\nerve \U))=0$. As both persistence diagrams are finite, $\textrm{Dgm}(\nerve \U)=\textrm{Dgm}(\W)$.
	\end{proof}
\section{A Biased Estimator?}
\label{sec:biased}

Here we present a trick to tighten the approximation of $\text{Dgm}_K(\W)$ by a factor of $2$ given the value of $\e$ is known, where $t:=(K+1)\e$.
This is a result of our symmetric interleaving being constructed from an asymmetric interleaving.

Define $\widetilde{\N}:=(|\widetilde{N}^\alpha|)_{\alpha\geq 0}$, where $\widetilde{N}^\alpha:= N^{\alpha-\frac{t}{2}}$.
This is a shift of the time parameter of $\N=(|N^\alpha|)_{\alpha\geq 0}$ by $\frac{t}{2}$.
We have that $|N^\alpha|=|\widetilde{N}^{\alpha+\frac{t}{2}}|$, so this results in a symmetric $\frac{t}{2}$-interleaving consisting of the maps $p^\alpha: H^{\CW}_K(B^\alpha)\rightarrow H^{\CW}_K(\widetilde{N}^{\alpha+\frac{t}{2}})$ and $\widehat{q}^\alpha: H^{\CW}_K(\widetilde{N}^{\alpha+\frac{t}{2}}) \rightarrow H^{\CW}_K(B^{\alpha+t})$, as opposed to the similarly defined asymmetric interleaving constructed in the proof of The Generalized Persistent Nerve Theorem.
By Theorem~\ref{thm:interleaving_bound}, $\dist_B(\text{Dgm}_K(\widetilde{\N}), \text{Dgm}_K(\B))\leq \frac{K+1}{2}\e$.

There is an unknown shift of the persistence diagram based on the goodness of the cover, which will give a better approximation to the true persistence diagram.
A more loose bound on the goodness of the cover might imply that one ought to use a bigger shift, but it's not clear how to know what shift exactly that is.
Recently, with another coauthor, we presented an approach for computing bottleneck distances up to shifts~\cite{cavanna17computing}.

\bibliographystyle{unsrt} 
\bibliography{bibliography.bib}

\appendix
\section{Tight Bound Example}\label{sec:tight_bound}
In this section an example is presented that proves the tightness of the bottleneck distance bound of Theorem~\ref{thm:gpnl2} over all dimensions.
This is equivalent to the example in Subsection 9.3 of Govc and Skraba~\cite{govc17approximate} up to definitions and notation.
In particular a $1$-good cover $\U$ of a simplicial filtration $\W = (\bigcup_{i=0}^n U_i^\alpha)_{\alpha\geq 0}$ such that $\dist_B(\text{Dgm}_K(\nerve\U),\text{Dgm}_K(\W))= K+1$ for all $K\leq n$ is presented which can easily be adapted to an $\e$-good cover filtration for arbitrary $\e>0$ by appropriately scaling the filtration indices and the definition of each $U_i^\alpha$.

Consider the collection of $n+1$ $(n-1)$-simplices $t_i=|[n]\setminus \{i\}|$, where $[n]=\{0,1,\ldots, n\}$,  i.e.~the facets of the standard $n$-simplex.
Define the cover filtration as the collection of simplicial filtrations $\U=\{U_0,\ldots, U_n\}$ where
\[
U_i^{\alpha} :=\begin{cases}
 t_i & \text{ for } \alpha \in[0, n+1)\\
 t_i \cup (\bigcup\limits_{k=0}^{\lfloor \alpha \rfloor-n-1} t_k) & \text{ for } \alpha\in[n+1, 2n+2)\\
|[n]| & \text{ for } \alpha =  2n+2.\\
\end{cases}
\]

First we compute $H_*(\W)$.
For $0\leq\alpha< 2n+2$, $W^\alpha =\bigcup_{i=0}^n t_i=\partial |[n]|$, so $\rank~\tilde{H}_r(W^\alpha)=1$ if $r=n$ and $\rank~ \tilde{H}_r(W^\alpha)=0$ otherwise. 
For $\alpha =2n+2$,  $W^\alpha =  |[n]|$, which is contractible, so $\rank~\tilde{H}_r(W^\alpha)=0$ for all $r\geq 0$.
The ranks of these homology groups imply that there is one persistent $n$-dimensional homological feature from $\alpha=0$ to $\alpha=2n+2$, and none in any other dimension.

Next we compute $H_*(\nerve\U)$.
Note that any $k$-wise intersection of the faces of the standard $n$-simplex is non-empty iff $k\leq n$, so for $0\leq \alpha < n+1$, $\nerve U^\alpha$ is the abstract simplicial complex $2^{[n]}\setminus  [n] $.
Recall the notation $U_\sigma^\alpha:=\bigcap_{i\in\sigma} U_i^\alpha$.
For $n+1\leq \alpha< 2n+2$ and non-empty $\sigma\subseteq [n]$, we have that  $U_\sigma^\alpha = (\bigcap_{i\in\sigma} t_i) \cup (\bigcup_{k=0}^{\lfloor \alpha \rfloor - n-1} t_k)$ which is always non-empty, so $\nerve U^\alpha$ is the standard $n$-simplex.
The same is true for $\nerve U^{2n+2}$ as each cover element is identical.
The homology groups then, for all $\alpha\in [0,n+1)$, are $\tilde{H}_r(\nerve\mathcal{U}^\alpha)=1$ if $r=n$, and $0$ otherwise.
For all $\alpha\geq n+1$, for all $r\geq 0$,  $\tilde{H}_r(\nerve\mathcal{U}^\alpha)=0$, so there is one persistent $n$-dimensional homological feature from $\alpha = 0$ to $\alpha = n+1$.

By the above computations $\text{Dgm}_n(\W)$ consists of the diagonal and the point $(0,2n+2)\in\R^2$, while $\text{Dgm}_n(\nerve\U)$ consists of the diagonal and the point $(0,n+1)\in\R^2$.
Regardless of whether $(0,2n+2)$ is mapped to $(0,n+1)$ or the diagonal, the $\ell_\infty$ distance between the matched points is $n+1$, so $\dist_B(\text{Dgm}_{n}(\W),\text{Dgm}_{n}(\nerve\U))=n+1$.

Next we prove that $\U$ is a $1$-good cover --- in other words, for all $\sigma$, for all $\alpha$, we are to prove that $\tilde{H}_*(U_\sigma^\alpha \hookrightarrow U_\sigma^{\alpha+1})=0$.
Note that $\U^\alpha$ is a good cover of $W^\alpha$ for all $\alpha\in[0,n+1)$, as the intersection between $k$ faces of the standard $n$-simplex is a full $(n-k)$-simplex, which is contractible.
In order to prove that the rest of the cover filtration is $1$-good, we show that there are no scales $\alpha\geq n+1$ for which $\tilde{H}_*(U_\sigma^\alpha) \neq 0$ and $\tilde{H}_*(U_\sigma^{\alpha+1})\neq 0$.
To do so, we will check the non-trivial combinations of $\sigma\subseteq [n]$ and scales.
By the definition of the cover filtration's elements, it suffices to check only $U_\sigma^{n+1+j}$ for integral $j\in\{0,\ldots,n\}$.

\begin{lemma}\label{lem:Usigma_contractible}
For $\sigma\subseteq [n]$ and $m>j$ such that $m\not\in \sigma$, $U_\sigma^{n+1+j}$ is contractible to $|m|$ and $\tilde{H}_*(U_\sigma^{n+1+j})=0$.
\end{lemma}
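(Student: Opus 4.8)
The plan is to exhibit an explicit simplicial deformation retraction of $U_\sigma^{n+1+j}$ onto the single vertex $|m|$, from which the vanishing of reduced homology follows immediately. First I would unpack what $U_\sigma^{n+1+j}$ actually is: by the definition of the cover filtration, for $\alpha = n+1+j$ with $0 \le j \le n$ we have $U_i^\alpha = t_i \cup \bigl(\bigcup_{k=0}^{j} t_k\bigr)$, so
\[
U_\sigma^{n+1+j} \;=\; \bigcap_{i\in\sigma}\Bigl(t_i \cup \bigcup_{k=0}^{j} t_k\Bigr).
\]
The key combinatorial observation is that $\bigcup_{k=0}^{j} t_k$ is contained in each factor of the intersection as long as we are careful — more precisely, for each $i \in \sigma$, the term $\bigcup_{k=0}^j t_k$ is a subcomplex of $t_i \cup \bigcup_{k=0}^j t_k$, and I would argue that the intersection simplifies to $\bigl(\bigcap_{i\in\sigma} t_i\bigr) \cup \bigcup_{k\in\{0,\dots,j\}\setminus\sigma} t_k$ together with whatever faces of the $t_k$ for $k \in \sigma$ survive the intersection; the cleanest route is to describe $U_\sigma^{n+1+j}$ directly as the subcomplex of $|[n]|$ consisting of all faces $\rho \subseteq [n]$ such that for every $i \in \sigma$, either $i \notin \rho$ or $\rho \subseteq [n]\setminus\{k\}$ for some $k \le j$, i.e. $\rho \not\supseteq \{0,1,\dots,j\}$. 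Equivalently $\rho \in U_\sigma^{n+1+j}$ iff $\rho \cap \sigma = \emptyset$ or $\{0,\dots,j\} \not\subseteq \rho$.

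Next, given $m > j$ with $m \notin \sigma$, I claim $|m|$ is a vertex of this complex (since the singleton $\{m\}$ certainly satisfies the condition) and that the star of $m$ is all of $U_\sigma^{n+1+j}$ in a suitable sense — or at least that there is a simplicial collapse onto $|m|$. The natural tool is a straight-line homotopy / cone-type argument: I would try to show $U_\sigma^{n+1+j}$ is \emph{star-shaped} with respect to $m$, meaning that for every simplex $\rho$ in the complex, $\rho \cup \{m\}$ is also in the complex. To verify $\rho \cup \{m\} \in U_\sigma^{n+1+j}$: if $\rho \cap \sigma = \emptyset$ then $(\rho\cup\{m\})\cap\sigma = \emptyset$ since $m\notin\sigma$, so we are done; if instead $\{0,\dots,j\}\not\subseteq\rho$, then since $m > j$ adding $m$ cannot fill in any missing element of $\{0,\dots,j\}$, so $\{0,\dots,j\}\not\subseteq\rho\cup\{m\}$ and again $\rho\cup\{m\}$ lies in the complex. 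Hence the complex is a cone with apex $m$ (more precisely, coning with $m$ is a well-defined simplicial operation taking the complex to itself and fixing it), so it is contractible to $|m|$, and therefore $\tilde{H}_*(U_\sigma^{n+1+j}) = 0$.

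I would present this as: (1) rewrite $U_\sigma^{n+1+j}$ as the explicit subcomplex $\{\rho \subseteq [n] : \rho\cap\sigma = \emptyset \text{ or } \{0,\dots,j\}\not\subseteq\rho\}$, checking this against the filtration definition; (2) observe $m$ is a vertex and verify the cone closure property $\rho \in U_\sigma^{n+1+j} \Rightarrow \rho\cup\{m\} \in U_\sigma^{n+1+j}$ using $m > j$ and $m\notin\sigma$; (3) conclude contractibility to $|m|$ and triviality of reduced homology. The main obstacle I anticipate is step (1): getting the combinatorial description of the intersection $\bigcap_{i\in\sigma}U_i^{n+1+j}$ exactly right, since one must correctly track which faces of $t_k$ for $k\in\sigma\cap\{0,\dots,j\}$ survive intersecting with $t_i$ for $i\in\sigma$, $i\neq k$. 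Once the description is pinned down, steps (2) and (3) are short. An alternative to step (1)–(3) that sidesteps the bookkeeping is to note that $\bigcup_{k=0}^{j}t_k$ is already star-shaped with respect to $m$ for $m>j$ (every face avoiding $\{0,\dots,j\}$ can absorb $m$) and that intersecting with the $t_i$'s only removes faces containing all of $\sigma$, preserving the cone property; I would use whichever version reads more cleanly.
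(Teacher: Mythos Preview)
Your proposal is correct and follows essentially the same idea as the paper: show that $U_\sigma^{n+1+j}$ is a cone with apex $m$, hence contractible. The paper's execution is shorter because it uses the set-theoretic distributive law $\bigcap_{i\in\sigma}\bigl(t_i\cup A\bigr)=\bigl(\bigcap_{i\in\sigma}t_i\bigr)\cup A$ with $A=\bigcup_{k=0}^{j}t_k$ to write $U_\sigma^{n+1+j}$ directly as a union of full simplices, each of which contains the vertex $m$ (since $m\notin\sigma$ and $m>j$); this bypasses the face-by-face combinatorial description $\{\rho:\rho\cap\sigma=\emptyset\text{ or }\{0,\dots,j\}\not\subseteq\rho\}$ that you anticipate as the main bookkeeping hurdle, though your description is correct and your cone-closure verification from it is clean.
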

\begin{proof}
Recall that $U_i^{n+1+j} = t_i\cup (\bigcup_{k=0}^j t_k)$ and $U_\sigma^{n+1+j} =\bigcap_{i\in\sigma} t_i \cup (\bigcup_{k=0}^j t_k)$.
Given that $m\not\in\sigma$, then $|m|\in t_i$ for all $i\in\sigma$, so $|m|\in\bigcap_{i\in\sigma} t_i$.
In addition, $|m|\in t_k$ for all $k\neq m$, so $|m|\in t_k$ for all $k\leq j$ as $m>j$, so $|m|$ is in every simplex in $U_\sigma^{n+1+j}$, and thus $U_\sigma^{n+1+j}$ contracts to $|m|$.
\end{proof}

\begin{lemma}\label{lem:single_extra}
For $p\leq j<n$ and $\sigma =([n]\setminus [j]) \cup \{p\}$, $\tilde{H}_*(U_\sigma^{n+1+j})=0$.
\end{lemma}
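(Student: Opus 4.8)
The plan is to rewrite $U_\sigma^{n+1+j}$ as a union of facets of the standard $n$-simplex and then recognize that union as a cone. The first step is to expand the intersection: since $U_i^{n+1+j}=t_i\cup\bigcup_{k=0}^{j}t_k$, we have $U_\sigma^{n+1+j}=\bigcap_{i\in\sigma}\bigl(t_i\cup\bigcup_{k=0}^{j}t_k\bigr)$, and because the term $\bigcup_{k=0}^{j}t_k$ occurs in every factor, distributivity of intersection over union gives $U_\sigma^{n+1+j}=\bigl(\bigcap_{i\in\sigma}t_i\bigr)\cup\bigcup_{k=0}^{j}t_k$. Using $t_i=|[n]\setminus\{i\}|$, the first term equals $|[n]\setminus\sigma|$, and for $\sigma=([n]\setminus[j])\cup\{p\}$ one computes $[n]\setminus\sigma=[j]\setminus\{p\}$, so $U_\sigma^{n+1+j}=|[j]\setminus\{p\}|\cup\bigcup_{k=0}^{j}t_k$.

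The key observation, and the step that does the real work, is that this extra simplex contributes nothing new: $[j]\setminus\{p\}\subseteq[n]\setminus\{p\}$, hence $|[j]\setminus\{p\}|\subseteq t_p$, and because $p\le j$ the facet $t_p$ already appears among the facets in $\bigcup_{k=0}^{j}t_k$. Therefore $U_\sigma^{n+1+j}=\bigcup_{k=0}^{j}t_k$, with no dependence on $p$ at all.

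It then remains only to check that $\bigcup_{k=0}^{j}t_k$ is contractible, which is routine: I would argue it is a cone with apex the vertex $|n|$. For each $k\in\{0,\ldots,j\}$ we have $k\le j<n$, so $n\ne k$ and thus $n\in[n]\setminus\{k\}$; consequently every simplex of $t_k$ remains a simplex of $t_k$ after adjoining $n$, so the whole union is closed under adjoining $n$. Equivalently $\bigcup_{k=0}^{j}t_k=|n|\ast\bigl(\bigcup_{k=0}^{j}|[n]\setminus\{k,n\}|\bigr)$, which is contractible, so $\tilde{H}_*(U_\sigma^{n+1+j})=0$.

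There is no genuine obstacle here beyond bookkeeping. The one point that must not be glossed over is that the hypothesis $j<n$ is exactly what guarantees the cone apex $|n|$ lies outside $[j]$ and hence inside every $t_k$ with $k\le j$; this is precisely what separates this case from the degenerate one $\sigma=[n]\setminus[j]$, where $|[j]|$ is not contained in $\bigcup_{k=0}^{j}t_k$ and the homology need not vanish. One should also phrase the distributive law at the level of (downward closed) sets of simplices so that it applies to simplicial complexes verbatim.
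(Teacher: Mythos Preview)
Your proof is correct and follows essentially the same route as the paper: both reduce $U_\sigma^{n+1+j}$ to $\bigcup_{k=0}^{j} t_k$ by observing that $\bigcap_{i\in\sigma} t_i = |[j]\setminus\{p\}| \subseteq t_p$. The only difference is in the final step: the paper simply asserts that since $j<n$ this union is not the full boundary of $|[n]|$ and hence has trivial reduced homology, whereas you give the explicit cone argument with apex $|n|$, which is a cleaner justification of the same fact.
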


\begin{proof}
By assumption we know that $U_\sigma^{n+1+j}=( \bigcap_{i\in\sigma}t_i) \cup (\bigcup_{i=0}^j t_j)$, where $ \bigcap_{i\in\sigma}t_i = |[0,\ldots, \hat{p},\ldots, j]|$.
As $|[0,\ldots, \hat{p},\ldots, j]| \subseteq t_p$, $|[0,\ldots, \hat{p},\ldots, j]|\subset \cup_{i=0}^j t_j$, so we know $U_\sigma^{n+1+j} = \bigcup_{i=0}^j t_j$.
By the assumption that $j<n$, $U_\sigma^{n+1+j}$ is not the entire boundary of $|[n]|$ so $\tilde{H}_*(U_\sigma^{n+1+j})=0$.
\end{proof}

\begin{lemma}\label{lem:multi_extra}
Given non-empty $\sigma \supset [n]\setminus [j]$ for $0\leq j \leq n$, where $\#\{\sigma\cap [j]\}\geq 2$, $\tilde{H}_*(U_\sigma^{n+1+j}) =0$.
\end{lemma}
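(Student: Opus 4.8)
The strategy is to reduce the computation of $\tilde H_*(U_\sigma^{n+1+j})$ to a union of faces of the standard $n$-simplex and then apply the same ``proper sub-collection of the boundary is contractible'' principle used in Lemmas~\ref{lem:Usigma_contractible} and~\ref{lem:single_extra}. First I would write out $U_\sigma^{n+1+j} = \left(\bigcap_{i\in\sigma} t_i\right)\cup\left(\bigcup_{k=0}^{j} t_k\right)$, and compute $\bigcap_{i\in\sigma} t_i = |[n]\setminus\sigma|$, the face of the standard $n$-simplex spanned by the vertices not in $\sigma$. Since $\sigma\supseteq [n]\setminus[j]$, we have $[n]\setminus\sigma\subseteq [j]$, so this intersection is a face whose vertex set is contained in $\{0,\ldots,j\}$.

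The key step is to show this face $|[n]\setminus\sigma|$ is already contained in $\bigcup_{k=0}^j t_k$, so that $U_\sigma^{n+1+j} = \bigcup_{k=0}^{j} t_k$. A vertex set $A\subseteq [j]$ spans a simplex lying in some $t_k$ with $k\le j$ precisely when $A\not\ni k$ for some $k\le j$, i.e.\ when $A\neq [j]$, equivalently $A\subsetneq[j]$. Now $A = [n]\setminus\sigma$; the hypothesis $\#\{\sigma\cap[j]\}\ge 2$ forces $\#([j]\setminus\sigma)\le \#[j]-2 = j-1 < j+1 = \#[j]$, so $A = [n]\setminus\sigma\subsetneq [j]$, giving $|[n]\setminus\sigma|\subseteq t_k$ for any $k\in[j]\setminus A$ (such $k$ exists). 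Hence $U_\sigma^{n+1+j}=\bigcup_{k=0}^{j}t_k$.

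Finally I would invoke the fact, already used in Lemma~\ref{lem:single_extra}, that $\bigcup_{k=0}^{j}t_k$ for $j<n$ is a proper union of facets of $|[n]|$ (it omits $t_{j+1},\ldots,t_n$), hence is not the full boundary sphere $\partial|[n]|$; a proper nonempty sub-union of the facets of a simplex is contractible (it deformation retracts onto any omitted vertex, e.g.\ $|n|$, which lies in every $t_k$ with $k\le j<n$), so $\tilde H_*(U_\sigma^{n+1+j})=0$. The case $j=n$ needs a separate remark: then $\sigma\supseteq[n]\setminus[n]=\emptyset$ carries no constraint from the first containment, but $\#\{\sigma\cap[n]\}\ge 2$ means $\sigma$ has at least two elements, so $[n]\setminus\sigma\subsetneq[n]$ and the same argument shows $U_\sigma^{2n+2}$ contracts to an omitted vertex; alternatively one observes $U_\sigma^{2n+2}=|[n]|$ by the definition of the filtration at the top scale, which is contractible. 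The only mildly delicate point—and the one I would be most careful about—is the bookkeeping that $\#\{\sigma\cap[j]\}\ge 2$ is exactly what is needed to guarantee $[n]\setminus\sigma$ is a \emph{proper} subset of $[j]$ rather than all of $[j]$; if it equalled $[j]$ then $|[j]|\not\subseteq\bigcup_{k\le j}t_k$ and one would instead recover the full boundary sphere (this is precisely the situation producing the nonzero class, handled in the earlier homology computations, so the hypothesis is sharp).
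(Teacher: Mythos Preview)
Your argument for $j<n$ is correct and in fact more direct than the paper's. The paper proceeds by showing $U_\sigma^{n+1+j}=U_{\sigma\setminus p}^{n+1+j}$ for any $p\in\sigma\cap[j]$, iteratively peeling off elements of $\sigma\cap[j]$ until only one remains, and then invoking Lemma~\ref{lem:single_extra}. You instead compute $U_\sigma^{n+1+j}=\bigcup_{k=0}^j t_k$ in one step---which is precisely the computation inside the proof of Lemma~\ref{lem:single_extra}---so your route effectively absorbs that lemma rather than reducing to it. Your implicit observation that $\#\{\sigma\cap[j]\}\ge 1$ already suffices for $|[n]\setminus\sigma|\subseteq t_k$ (some $k\le j$) is correct and shows the two lemmas could have been merged.

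Your treatment of the boundary case $j=n$, however, has a genuine slip. For $j=n$ the space in question is $U_\sigma^{n+1+n}=U_\sigma^{2n+1}$, not $U_\sigma^{2n+2}$, and there $\bigcup_{k=0}^{n} t_k=\partial|[n]|$ is the \emph{entire} boundary sphere; no facet is omitted, so the ``contracts to an omitted vertex'' argument cannot be run. Indeed $U_\sigma^{2n+1}=\partial|[n]|$ for every nonempty $\sigma$, whence $\tilde H_{n-1}(U_\sigma^{2n+1})\neq 0$ and the lemma as stated is actually false at $j=n$. This is a defect already present in the paper's formulation (their reduction to Lemma~\ref{lem:single_extra}, which explicitly requires $j<n$, has the same gap), but it does no damage to the appendix's purpose: since $U_\sigma^{2n+2}=|[n]|$ is contractible, the $1$-goodness condition $\tilde H_*(U_\sigma^{2n+1}\hookrightarrow U_\sigma^{2n+2})=0$ holds regardless. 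You should simply restrict the statement to $0\le j<n$ and note the top scale separately.
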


\begin{proof}
First we claim that for any $p\in \sigma \cap [j]$, $U_\sigma^{n+1+j}=U_{\sigma\setminus p}^{n+1+j}$.
By definition, $U_\sigma^{n+1+j}\subseteq U_{\sigma\setminus p}^{n+1+j}$.
Now given some $x\in U_{\sigma\setminus p} ^{n+1+j}$, $x\in \bigcap_{i\in \sigma\setminus p} t_i$ or $x\in \bigcup_{k=0}^j t_j$.
Clearly in the latter case, $x\in U_{\sigma}^j$.
In the former case, we have that $\bigcap_{i\in \sigma} t_i = |[0,\ldots, \hat{p}_1,\ldots, \hat{p}_l ,\ldots, j]|$ for some vertices $\hat{p}_1,\ldots, \hat{p}_l\in (\sigma\cap[j])\setminus \{p\}$, so $|[0,\ldots, \hat{p}_1,\ldots, \hat{p}_l ,\ldots, j]|\subseteq t_{p_l}\subseteq \cup_{k=0}^j t_k$, and thus $x\in U_\sigma ^j$,  giving us the desired equality.

By the above, the only  case to check is when $\#\{\sigma \cap [j]\}=1$.
Applying Lemma~\ref{lem:single_extra} in this case proves that $\tilde{H}_*(U_\sigma^{n+1+j})=0$.
\end{proof}

 Lemma~\ref{lem:Usigma_contractible}, \ref{lem:single_extra} and \ref{lem:multi_extra} check all non-trivial cases  so $U_\sigma^{n+1+j}$ has non-trivial homology only if $\sigma =[n]\setminus [j]$ for  $j\in \{0,\ldots, n\}$.
This fact along with the second half of the cover filtration being good proves that $\U$ is a $1$-good cover of $\W$.
 To complete the construction simply take the disjoint union of the cover filtrations $\U$ over all required dimensions along with the corresponding covered space filtration.

\section{The Homotopy Colimit}
\label{sec:hom_colimit}

Define $\Delta$ as a finite acyclic directed graph, or equivalently a strict poset.
It can be given the structure of an abstract simplicial complex by defining  $k$-simplices are as sequences $v_0\rightarrow \ldots \rightarrow v_k$, where $v_i \rightarrow v_{i+1}$ for all $i\in [k]$.

Consider a functor, known as a diagram, $\dgm: \Delta \rightarrow \textrm{Top}$, where $\textrm{Top}$ is the category of topological spaces.
The \textbf{homotopy colimit} of $\dgm$ is defined as

    \[
      \hocolim \dgm := \Big(\bigsqcup_{\substack{\Delta \ni \sigma = v_0\to\cdots\to v_k \\ k\geq 0}} ( \dgm(v_0)\times |\sigma| \Big)/\sim,
    \]
    
 where the disjoint union is taken over all simplices $\sigma\in\Delta$ and the homotopy colimit is given the quotient topology.
  The equivalence relation $\sim$, describing the gluing procedure along the boundaries of the $k$-simplices of $\Delta$, is defined as follows.
    Given a $k$-simplex, consider the subsimplices of codimension $1$, $\tau_i=v_0\rightarrow\ldots \rightarrow \hat{v}_i \rightarrow \ldots v_k$, for each $i$,  and their geometric realizations $|\tau_i|$.
    For all $i$, let $f_i: |\tau_i|\hookrightarrow |\sigma|$ be the $i$-th face map.
    For $i>0$ we have the following equivalences
    \[
    \dgm(v_0)\times |\tau_i| \ni (x,y)\sim (x, f_i(y))\in \dgm(v_0)\times |\sigma|,
    \]
    and for $i=0$ we have the following equivalences
    \[
     \dgm(v_1)\times|\tau_0|\ni (\dgm(v_0\rightarrow v_1)(x),y)\sim  (x, f_0(y)) \in \dgm(v_0)\times|\sigma| .
    \]
      Refer to p. 262 of Kozlov~\cite{koslov08} for more information regarding this general construction.
    
Consider a simplicial cover of finite simplicial complexes $\U=\{U_0,\ldots, U_n\}$ and the corresponding abstract simplicial complexes $N^\alpha$ as defined in Subsection~\ref{sec:nerve_diagram}, where there is a morphism $v\rightarrow v'$ iff $v'\subset v\subseteq [n]$ for non-empty $v'$.
For each $\alpha\geq 0$, define the nerve diagram as the functor $\dgm^\alpha : N^\alpha \rightarrow \textrm{Top}$ such that for each $0$-simplex $v \in N^\alpha$, $\dgm^\alpha(v):= \bigcap_{i \in v} U_i^\alpha$  and for each $1$-simplex $v\rightarrow v'$, $\dgm^\alpha(v\rightarrow v'):= U_{v'}^\alpha\hookrightarrow U_v^\alpha$.

Here we prove that the  barycentric decomposition of the blow-up complex $B^\alpha$ is the homotopy colimit of the nerve diagram $\dgm^\alpha$. 

\begin{proposition}\label{thm:hom_colimit}
For the nerve diagrams $\dgm^\alpha: N^\alpha \rightarrow \textrm{Top}$, we have the following homotopy colimits for each $\alpha\geq 0$.
     \[
      \hocolim \dgm^\alpha = \bigcup_{\substack{N^\alpha\ni \sigma = v_0\to\cdots\to v_k \\ k\geq 0}} U_{v_0}^\alpha\times |\sigma|.
    \]
    \end{proposition}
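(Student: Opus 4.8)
The plan is to unravel the definition of the homotopy colimit given in the appendix and check that, for the particular nerve diagram $\dgm^\alpha$, the generic quotient construction $\bigl(\bigsqcup_\sigma \dgm^\alpha(v_0)\times|\sigma|\bigr)/\!\sim$ is literally the same space as the union $\bigcup_\sigma U_{v_0}^\alpha\times|\sigma|$ that defines $B^\alpha$. The key observation is that every space appearing in the disjoint union, namely $\dgm^\alpha(v_0)=U_{v_0}^\alpha$, is a \emph{subcomplex} of the fixed ambient simplicial complex $W^\alpha$, and every structure map $\dgm^\alpha(v\to v')$ is an honest inclusion $U_{v'}^\alpha\hookrightarrow U_{v}^\alpha$ (note that in the $N^\alpha$ convention a morphism $v\to v'$ means $v'\subset v$, so $U_v^\alpha\subseteq U_{v'}^\alpha$ as sets and the map is inclusion). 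Consequently all the identifications imposed by $\sim$ are identifications of genuinely equal points inside the ambient product $W^\alpha\times|N^\alpha|$, so the quotient is a subspace of that product rather than something more exotic.

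Concretely, I would proceed as follows. First, define the map $\Phi:\bigsqcup_{\sigma} U_{v_0(\sigma)}^\alpha\times|\sigma| \to W^\alpha\times|N^\alpha|$ that on the $\sigma$-summand sends $(x,y)\mapsto (x, y)$, where on the left $x\in U_{v_0}^\alpha\subseteq W^\alpha$ and $y\in|\sigma|\subseteq|N^\alpha|$ under the standard inclusion of a face into the realization of the flag complex. Its image is exactly $\bigcup_\sigma U_{v_0}^\alpha\times|\sigma| = B^\alpha$. Second, I would check that $\Phi$ is constant on $\sim$-equivalence classes: for the $i>0$ relations, $(x,y)\sim(x,f_i(y))$ and $\Phi$ sends both to $(x, f_i(y))$ since $f_i$ is just the inclusion of the face $|\tau_i|$ into $|\sigma|$ and $x$ is unchanged; for the $i=0$ relation, $(\dgm^\alpha(v_0\to v_1)(x), y)\sim(x,f_0(y))$, and since $\dgm^\alpha(v_0\to v_1)$ is the inclusion $U_{v_1}^\alpha\hookrightarrow U_{v_0}^\alpha$ the point $x\in U_{v_1}^\alpha$ and its image agree as elements of $W^\alpha$, so again $\Phi$ identifies them. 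Hence $\Phi$ descends to a continuous bijection $\overline\Phi:\hocolim\dgm^\alpha\to B^\alpha$. Third, to get that $\overline\Phi$ is a homeomorphism I would invoke that both sides are finite CW-complexes (the source because the hocolim of a finite diagram of finite CW-complexes is a finite CW-complex, the target by the construction of $B^\alpha$ in Section~\ref{sec:nerve_diagram}) and that a continuous bijection between compact Hausdorff spaces is a homeomorphism; alternatively, one can exhibit the inverse cell-by-cell, sending the cell $U_{v_0}^\alpha\times|\sigma|$ of $B^\alpha$ back to the corresponding summand. Either route finishes the identification.

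The one genuinely delicate point — and the step I would flag as the main obstacle — is the bookkeeping showing that the $\sim$-classes are in bijection with the cells of $B^\alpha$ and that no \emph{extra} collapsing happens: one must be careful that when two different simplices $\sigma,\sigma'$ of $N^\alpha$ share a face, the corresponding pieces $U_{v_0(\sigma)}^\alpha\times|\sigma|$ and $U_{v_0(\sigma')}^\alpha\times|\sigma'|$ are glued along \emph{exactly} the subspace dictated by $\sim$ and not more, which is where the $i=0$ versus $i>0$ distinction and the direction convention on $N^\alpha$ matter. I would handle this by noting that a point of $W^\alpha\times|N^\alpha|$ lies in $y$'s carrier, the unique smallest simplex $\sigma$ of $N^\alpha$ containing $y$ in its relative interior, and tracing which summands of the disjoint union map onto a neighbourhood of that point; the chain of face relations $\tau\subseteq\sigma$ corresponds precisely to the generators of the equivalence relation, so the fibres of $\Phi$ are exactly the $\sim$-classes. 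Once this combinatorial matching is pinned down, the rest is routine, and the proposition follows.
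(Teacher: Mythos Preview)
Your proposal is correct and is essentially the same argument as the paper's: both rest on the observation that every structure map $\dgm^\alpha(v_0\to v_1)$ is a genuine inclusion of subcomplexes of $W^\alpha$, so the identifications imposed by $\sim$ glue pieces along literal intersections inside $W^\alpha\times|N^\alpha|$, turning the disjoint union modulo $\sim$ into the plain union $B^\alpha$. The paper carries this out by directly simplifying the two types of generating relations ($i>0$ and $i=0$) and concluding by induction, whereas you package the same idea via an explicit map $\Phi$ into the ambient product and a compact--Hausdorff (or CW) argument to promote the resulting bijection to a homeomorphism; your version is a bit more formal, but the substance is identical. One small slip: you write that $\dgm^\alpha(v_0\to v_1)$ is the inclusion $U_{v_1}^\alpha\hookrightarrow U_{v_0}^\alpha$, but with the convention $v_1\subset v_0$ it is $U_{v_0}^\alpha\hookrightarrow U_{v_1}^\alpha$; this does not affect your argument since all you use is that the map is an inclusion into $W^\alpha$.
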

    \begin{proof}
The equivalence relation simplifies as follows.
First have $\U_{v_0}^\alpha \times |\tau_i| \ni (x,y)\sim (x, y)\in U_{v_0}^\alpha\times |\sigma|$, for all $(x,y)\in U_{v_0}^\alpha \times |\tau_i|$,  so for each boundary element $\tau_i$ for $i>0$, one glues $U_{v_0}^\alpha \times |\tau_i|$ to its image under the component wise inclusion map, yielding $ U_{v_0}^\alpha \times |\sigma| = ((U_{v_0}^\alpha\times |\tau_i|) \bigsqcup (U_{v_0}^\alpha\times |\sigma|))/ \sim $ for each $i>0$.
    
The gluing procedure for $\tau_0$ simplifies to $U_{v_0}^\alpha \times |\sigma|  \ni (x, y)\sim (x,y)\in U_{v_1}^\alpha \times|\tau_0|,$ for all $x\in U_{v_0}\subseteq U_{v_1}$ and $y\in |\tau_0|\subset |\sigma|$, so the gluing occurs exactly along the two sets' intersection $U_{v_0}^\alpha\times|\tau_0| = (U_{v_0}^\alpha \times |\sigma|) \bigcap  (U_{v_1}^\alpha \times|\tau_0|)$, so we have that $ ((U_{v_0}^\alpha \times |\sigma|)) \bigsqcup (U_{v_1}^\alpha \times|\tau_0|) / (U_{v_0}^\alpha\times|\tau_0|)=  (U_{v_0}^\alpha \times |\sigma|)\bigcup (U_{v_1}^\alpha \times|\tau_0|)$.
    
Since these two situations via  induction collectively determine the gluing procedures for all simplices of $N^\alpha$, the result follows.
    \end{proof}

\end{document}